\newtheorem{theorem}{Theorem}[section]
\newtheorem{lemma}[theorem]{Lemma}
\newtheorem{question}{Question}
\newtheorem{proposition}[theorem]{Proposition}
\newtheorem{mainthm}{Theorem}
\newtheorem{maincor}[mainthm]{Corollary}
\newcommand\N{{\mathbb N}}
\newcommand\K{{\mathbb K}}
\DeclareMathOperator{\GKdim}{GKdim}
\begin{document}

\title{Images of Golod-Shafarevich algebras with small growth}

\author{Agata Smoktunowicz}
\address{Maxwell Institute for Mathematical Sciences\\
  School of Mathematics, University of Edinburgh\\
  James Clerk Maxwell Building, King's Buildings, Mayfield Road\\
  Edinburgh EH9 3JZ, Scotland, UK}
\email{A.Smoktunowicz@ed.ac.uk}

\author{Laurent Bartholdi}
\address{Georg-August Universit\"at zu G\"ottingen}
\email{Laurent.Bartholdi@gmail.com}

\thanks{The first author was supported by was supported by
  EPSRC grant \#EP/D071674/1, and the second author was supported by  the Courant research Centre
  ``Higher Order Structures''}

\date{August 21, 2011; revised February 7, 2012 and August 25, 2012}

\begin{abstract}
  We show that Golod-Shafarevich algebras can be homomorphically
  mapped onto infinite dimensional algebras with polynomial growth
  when mild assumptions about the number of relations of given degrees
  are introduced. This answers a question by Zelmanov.

  In the case that these algebras are finitely presented, we show that
  they can be mapped onto infinite dimensional algebras with at most
  quadratic growth.

  We then use an elementary construction to show that any sufficiently
  regular function $\succsim n^{\log n}$ may occur as the growth
  function of an algebra.
\end{abstract}

\subjclass[2000]{16N40, 16P90}
\keywords{Golod-Shafarevich algebras, Growth of algebras, Gelfand-Kirillov dimension}

\maketitle


\section*{Introduction}

\begin{quote}
  \raggedright ``Is every finitely generated torsion group finite?''\par
  \raggedleft (the \emph{Burnside} problem)

  \raggedright ``Is every finitely generated algebraic algebra finite dimensional?''\par
  \raggedleft (the \emph{Kurosh} problem)
\end{quote}

The seminal work of Golod and
Shafarevich~\cites{golod:nil,golod-s:orig} in~1964 showed that the
answer to these famous problems is negative. Their method entailed the
construction of an infinite dimensional, finitely generated nil graded
algebra $R$ by carefully adding relators; the elements of the form
$1+n$ for $n$ in the generating set of $R$ generate an infinite
torsion group.

Their construction is quite flexible, and has been generalized in
various directions to obtain more information on the \emph{growth} of
$R$, which we now define.

Let $R$ be an associative algebra generated by a finite dimensional
subspace $S$. Then the \emph{growth} of $R$ is the function
$v(n)=\dim(1+S+S^2+\cdots+S^n)$. This depends upon $S$, but only mildly: if
$S'$ be another generating subspace for $R$, then the corresponding
growth function $v'$ is related to $v$ by the inequalities
\[v(n)\le v'(Cn),\qquad v'(n)\le v(Cn)\] for some constant $C$. We
write $v\precsim v'$ if one, and $v\sim v'$ if both of the
inequalities above are satisfied; then the equivalence class of $v$ is
independent of $S$. (Note that~\cite{krause-l:gkdim}*{pp. 5--6} define
a slightly weaker relation, based on the inequalities $v(n)\le Cv'(Cn)$
and $v'(n)\le Cv(Cn)$; our results hold for both.)

The algebra $R$ has \emph{polynomial} growth if $v(n)\precsim n^d$ for
some $d$; the infimal $d$ such that $v(n)\precsim n^d$ is the
\emph{Gelfand-Kirillov} dimension of $R$. $R$ has
\emph{superpolynomial} growth if no such $d$ exists, it has
\emph{exponential} growth if $v(n)\sim 2^n$ and \emph{subexponential}
growth otherwise.

The groups and the algebras constructed by the Golod-Shafarevich
method have exponential growth. Much later, Gromov~\cite{gromov:nilpotent}
proved that under the assumption that the group has polynomial growth,
the answer to the Burnside Problem is positive. In fact, he proved
that a finitely generated group with polynomial growth has a nilpotent
normal subgroup of finite index. As a consequence, if a
finitely generated group has polynomial growth and each element has
finite order then the group is finite.

Zelmanov asked in~\cite{zelmanov:openpbalgebras}*{Problem~5} whether
Golod-Shafarevich algebras always have infinite dimensional
homomorphic images with polynomial growth. It was shown
in~\cite{smoktunowicz:GKgeneric} that this is not the case.  The same was
shown for Golod-Shafarevich groups by Ershov~\cite{ershov:goshat} ---
there exist Golod-Shafarevich groups without infinite images of
polynomial growth; indeed there exist Golod-Shafarevich groups
satisfying Kazhdan's property~(T).

\noindent Our first main result is the following
\begin{mainthm}[Golod-Shafarevich algebras]\label{thm:gosha}
  Let $\K$ be an algebraically closed field, and let $A=\K\langle x,
  y\rangle$ be the free noncommutative algebra generated (in degree
  one) by elements $x,y$. Let $d\ge 50$ be given; let a sequence
  $(D(k))_{k\in\N}$ of subspaces of $A$ be given, with $D(k)$
  homogeneous of degree $k$, such that
  \begin{enumerate}\renewcommand\theenumi{\ref{thm:gosha}.\arabic{enumi}}
  \item $D(k)=0$ if $k/\log k<18d$;\label{gosha:1}
  \item $D(k)=0$ if $2^n-2^{n-3}<k\le2^n+2^{n-1}$ for some $n\in\N$;
  \item $D(k)=0$ if there exist $j,n\in\N$ such that $D(j)\neq0$
    and $j<2^n<k<\max\{j^{1000},12dnj\}$;\label{gosha:3}
  \item $\dim D(k)\le k^d$ for all $k\in\N$.
  \end{enumerate}

  Then $A/\langle D(k)\colon k\in\N\rangle$ can be homomorphically
  mapped onto an infinite dimensional algebra with Gelfand-Kirillov
  dimension at most $45d$.
\end{mainthm}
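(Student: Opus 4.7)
The strategy is a dyadic induction on degree. For each $n$ I would construct homogeneous subspaces $V(2^n), U(2^n) \subseteq A(2^n)$, and take the target algebra to be $B = A/J$, where $J$ is the two-sided ideal generated by $\bigcup_n U(2^n)$. The plan is to arrange simultaneously that (a) every $D(k)$ lies in $J$, (b) the image of $V(2^n)$ in $B$ remains nonzero for every $n$, and (c) the dimensions $\dim V(2^n)$ grow slowly enough to force $\GKdim B \le 45 d$. Infinite dimensionality of $B$ then follows from (b), and the Gelfand--Kirillov bound from (c).

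The base case uses hypothesis~\ref{gosha:1}: for $n$ small, $D(k) = 0$ for all $k \le 2^n$, so one may start with $V(2^{n_0}) = A(2^{n_0})$ and $U(2^{n_0}) = 0$. For the inductive step from $n$ to $n+1$, first form the candidate product space $V(2^n) \cdot V(2^n) \subseteq A(2^{n+1})$, then let $F'(2^{n+1})$ be the subspace obtained by lifting the new relations $D(k)$ for $2^n < k \le 2^{n+1}$ up to degree $2^{n+1}$ by left and right multiplication by $V(2^n)$-pieces. The second hypothesis of the theorem localises these relations to a thin dyadic window, and hypothesis~\ref{gosha:3} controls interference from previously added relations through an inclusion of the form
\[
  F'(2^{n+1}) \subseteq F(2^{n+1}) + U(2^n) A(2^n) + A(2^n) U(2^n),
\]
i.e.\ the propagation step flagged in the authors' cover letter. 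Finally choose $U(2^{n+1}) \supseteq F'(2^{n+1})$ of codimension $\dim V(2^{n+1})$ in $V(2^n) \cdot V(2^n)$, with $\dim V(2^{n+1})$ prescribed in advance to be on the order of $(2^{n+1})^{45 d}$, and let $V(2^{n+1})$ be a complement.

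The main obstacle, and the reason the constant $45 d$ appears, is the verification that $V(2^{n+1})$ can indeed be chosen nontrivial, i.e.\ the Golod--Shafarevich-style inequality $\dim F'(2^{n+1}) + \dim V(2^{n+1}) \le \dim V(2^n)^2$. Using the fourth hypothesis, the left-hand side is bounded by $\sum_k \dim D(k) \cdot \dim V(2^{n+1} - k) + (2^{n+1})^{45 d} \lesssim \sum_k k^d \cdot (2^{n+1}-k)^{45 d} + (2^{n+1})^{45 d}$, summed over the sparse window permitted by hypotheses (2) and~\ref{gosha:3}, while the right-hand side is on the order of $(2^n)^{90 d}$. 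Checking the resulting inequality is elementary but delicate, and the numerical thresholds $d \ge 50$, $45 d$, and the constants $18 d, 12 dn, j^{1000}$ hidden in the hypotheses are tuned precisely so that there is enough slack at every inductive step to keep $V(2^n) \neq 0$.

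Finally, to pass from the dyadic bound $\dim V(2^n) \lesssim (2^n)^{45 d}$ to a global Gelfand--Kirillov estimate on $B$, hypothesis~\ref{gosha:3} is used once more: at any $k$ strictly between $2^n$ and $2^{n+1}$, the relation space $D(k)$ vanishes outside a short interval after $2^n$, so $B(k)$ is an image of a bilinear product $V(2^n) \cdot B(k - 2^n)$ (or a symmetric analogue), which yields a polynomial-in-$k$ dimension bound. Summing across dyadic scales produces $\GKdim B \le 45 d$, and the inductive construction guarantees $V(2^n) \neq 0$ for every $n$, so $B$ is infinite dimensional, as required.
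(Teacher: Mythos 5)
Your high-level plan — a dyadic construction of complementary spaces $U(2^n) \oplus V(2^n) = A(2^n)$, absorbing the relations $D(k)$ into the $U$'s, propagating via $F'(2^{n}) \subseteq F(2^{n}) + U(2^{n-1})A(2^{n-1}) + A(2^{n-1})U(2^{n-1})$, and reading off growth from $\dim V$ — matches the spirit of the paper, and you correctly identify the role of hypotheses \eqref{gosha:1}--\eqref{gosha:3} and the dimension count $\dim F'(2^{n}) + 2 \le \dim V(2^{n-1})^2$. But there are two genuine gaps in the middle of the argument.

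\textbf{The ideal is wrong.} You take the quotient by $J = \langle \bigcup_n U(2^n) \rangle$, the two-sided ideal generated by the $U$-spaces. The paper instead quotients by the graded subspace
\[
\mathscr E(k)=\bigl\{r\in A(k)\;\big|\;ArA\cap A(2^{n+1})\subseteq U(2^n)A(2^n)+A(2^n)U(2^n)\bigr\},
\]
which it proves is an ideal (quoting Lenagan--Smoktunowicz). The reason this matters is your step (b): for $B$ to be infinite-dimensional you need $V(2^n) \cap J = 0$ for all $n$, which would follow if $J \cap A(2^n) \subseteq U(2^n)$; but $J \cap A(2^n)$ contains pieces like $A(a)U(2^m)A(b)$ with $a + 2^m + b = 2^n$ and $a,b>0$, and Condition~\eqref{prop:6} (doubling) and Lemma~\ref{lem:A=U+V} only control one-sided multiplication of $U^<$ (left) and $U^>$ (right). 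There is no reason for $A(a)U(2^m)A(b)$ to land in $U(2^n)$ when you multiply on both sides in the middle of a dyadic block, so $J$ may well swallow part of $V(2^n)$ and even leave $A/J$ finite-dimensional. The paper's $\mathscr E$ is tailored precisely to sidestep this: Proposition~\ref{prop:A/Egrowth} gives $\dim A(k)/\mathscr E(k) \le \sum_j \dim V^<(k-j)\dim V^>(j)$, and the infinite-dimensionality comes from $V(2^{n+2}) \subseteq V(2^n)^4$ (Condition~\eqref{prop:7}) not landing in $U(2^{n+2})$. Your proposal neither establishes that $J$ has these properties nor that $J \supseteq \mathscr E$, so the growth and infinite-dimension arguments do not transfer.

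\textbf{The Nullstellensatz step is missing.} You write ``choose $U(2^{n+1}) \supseteq F'(2^{n+1})$ of codimension $\dim V(2^{n+1})$ in $V(2^n)V(2^n)$'' as if this were a routine dimension count. It is not: the construction must produce a subspace $F\subseteq A(2^n)$ of codimension at least $2$ that simultaneously contains $T := F'(2^n) + U(2^{n-1})A(2^{n-1}) + A(2^{n-1})U(2^{n-1})$ \emph{and} satisfies $Q \subseteq FA(2^n) + A(2^n)F$, where $Q$ is the degree-$2^{n+1}$ image of the relations. The second inclusion is a nonlinear constraint on $F$; the paper handles it (Lemma~\ref{lem:s1}) by parametrising codimension-two subspaces by points of $\K^{2s}$, observing that the failure set is the zero locus of polynomials $y_t z_u - z_t y_u$, and invoking Hilbert's Nullstellensatz together with the Gelfand--Kirillov dimension bound of Lemma~\ref{lem:GKdown1} to show the good locus is nonempty. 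This is why the theorem requires $\K$ algebraically closed — a hypothesis your outline never uses. Finally, your quantitative picture is off: the paper has $\dim V(2^n) = 2$ for $n \notin S$ and $\dim V(2^n) = 2^{2^j} \lesssim (2^n)^{10d}$ in the exceptional windows, not $(2^n)^{45d}$; the exponent $45d$ arises only in Lemma~\ref{lem:3.5} from the \emph{product} bound $\dim V^<(k-j)\dim V^>(j) \lesssim k^{44d}$ summed over $j$.
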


\begin{mainthm}[finitely presented Golod-Shafarevich algebras]\label{cor:gosha}
  With the same assumptions and notations as in
  Theorem~\ref{thm:gosha}, if we assume $D(k) = 0$ for almost all
  $k$, then $A/\langle D(k)\colon k\in\N\rangle$ can be mapped onto an
  infinite dimensional algebra with at most quadratic growth.
\end{mainthm}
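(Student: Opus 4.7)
The plan is to rerun the construction of Theorem~\ref{thm:gosha}, exploiting the finiteness of the presentation to collapse the auxiliary subspaces beyond the largest degree carrying a relation. Let $N\in\N$ be such that $D(k)=0$ for all $k>N$, and fix $n_0$ with $2^{n_0}>N$. Up to level $n_0$, I would apply the construction of Theorem~\ref{thm:gosha} verbatim, producing subspaces $U(2^n)\supseteq V(2^n)$ for $n\le n_0$ that absorb every nonzero $D(k)$ and satisfy the recursive inclusions of Lemma~3.3, Lemma~3.4 and Proposition~3.5. For $n>n_0$, since there are no remaining relations $D(k)$ of degree between $2^n$ and $2^{n+1}$ to accommodate, I would instead take $U(2^n)=0$ and choose $V(2^n)$ of the smallest possible dimension compatible with the multiplicative recursion $V(2^n)\cdot V(2^n)\subseteq V(2^{n+1})$ modulo the ideal generated by $U(2^{n_0})$. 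Inductively this allows $\dim V(2^n)$ to remain bounded by a constant (or at worst to grow linearly in $n-n_0$).

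A direct dimension count then shows that the resulting algebra $R$, which is still a homomorphic image of $A/\langle D(k)\colon k\in\N\rangle$ by exactly the same argument as in Theorem~\ref{thm:gosha} (no part of that argument breaks when the relations are eventually trivial), has controlled growth: summing the contribution of each doubling scale $2^n\le k<2^{n+1}$, the $k$-th graded piece has dimension $O(k)$, so $v(k)=O(k^2)$. This is at most quadratic growth, giving the conclusion.

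The main obstacle will be checking that the trivialized choice $U(2^n)=0$ for $n>n_0$ remains compatible with the inclusions $F'(2^n)\subseteq F(2^n)+U(2^{n-1})A(2^{n-1})+A(2^{n-1})U(2^{n-1})$ that drive Proposition~3.5. The hope is that once no new relations appear, the right-hand side reduces to $F(2^n)$ and the inclusion becomes essentially tautological; this would reduce the whole recursion to a purely multiplicative statement about products $V(2^{n-1})\cdot V(2^{n-1})$ which is controlled by the same grading arguments already used in Theorem~\ref{thm:gosha}. Should the minimal admissible dimension of $V(2^n)$ turn out to grow linearly in $n$ rather than stay bounded, the estimate degrades to $v(k)=O(k^2\log k/\log k)=O(k^2)$, which still suffices for the stated conclusion.
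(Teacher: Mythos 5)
Your proposal points in the wrong direction when you pass beyond the last degree carrying a relation. You propose taking $U(2^n)=0$ for $n>n_0$, on the grounds that there are ``no remaining relations to accommodate.'' But in the framework of Theorem~\ref{8props}, the spaces $U(2^n)$ are not the relations themselves; they are (large) complements of $V(2^n)$ in $A(2^n)$, and they feed the ideal $\mathscr E$ via the defining condition $ArA\cap A(2^{n+1})\subseteq U(2^n)A(2^n)+A(2^n)U(2^n)$. Setting $U(2^n)=0$ for $n>n_0$ has two fatal consequences. First, it violates Condition~\eqref{prop:6}: since $U(2^{n_0})\neq 0$ (it contains the images of the relations $D(k)$ for $k$ near $2^{n_0}$), the required inclusion $A(2^{n_0})U(2^{n_0})+U(2^{n_0})A(2^{n_0})\subseteq U(2^{n_0+1})$ already forces $U(2^{n_0+1})\neq 0$. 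Second, even ignoring this, $U(2^n)=0$ forces $V(2^n)=A(2^n)$ (by Condition~\eqref{prop:5}), which has dimension $2^{2^n}$, and it makes $\mathscr E(k)=0$ for $k\geq 2^{n_0-1}$; then $A/\mathscr E$ has \emph{exponential} growth, not quadratic.

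What the paper actually does is the opposite: it makes $V(2^n)$ as small as possible, namely one-dimensional, for $n>t$. Writing $V(2^t)=\K m_1+\K m_2$, one sets $V(2^{t+1})=\K m_1m_1$, $U(2^{t+1})=(U(2^t)+\K m_2)A(2^t)+A(2^t)(U(2^t)+\K m_2)$, and then $V(2^{n+1})=V(2^n)V(2^n)$, $U(2^{n+1})=U(2^n)A(2^n)+A(2^n)U(2^n)$ for $n>t$. This keeps all conditions of Theorem~\ref{8props} intact while ensuring $\dim V(2^n)=1$ eventually, so that $\dim V^<(k)$ and $\dim V^>(k)$ are bounded by a constant $C$; Proposition~\ref{prop:A/Egrowth} then gives $\dim A(k)/\mathscr E(k)\leq (k+1)C^2$, whence quadratic growth. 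Your final estimate ``$v(k)=O(k^2\log k/\log k)$'' is also off: if $\dim V(2^n)$ grew linearly in $n$, the products $V^<(k)=V(2^{p_1})\cdots V(2^{p_t})$ over the binary expansion of $k$ could have dimension of order $(\log k)^{\log k}$, which is superpolynomial, not $O(1)$ or $O(\log k)$. The bounded-dimension choice $\dim V(2^n)=1$ for $n>t$ is essential, and it is a constraint on $V$, not on $U$.
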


We recall the definition of Golod-Shafarevich algebras;
see~\cite{zelmanov:openpbalgebras}. Let $A$ be the free associative
algebra on the set of free generators $X=\{x_1, x_2, \ldots ,x_m\}$
over a field $\K$. It is graded, with generators $x_i$ of degree
$1$. Let $\mathscr R$ be a graded subspace of $A$, without constant
term, and write $\mathscr R(i)$ its homogeneous component of degree
$i$. The formal series $H_{\mathscr R}(t)=\sum_{i=1}^{\infty }\dim
\mathscr R(i) t^i$ is called the \emph{Hilbert series} of $\mathscr
R$.  The algebra $Q=\langle X\mid \mathscr R\rangle$ presented by
generators $X$ and relators $\mathscr R$ is graded; call $Q(i)$ its
degree-$i$ component, and $H_Q(t)=\sum_{i=0}^{\infty}\dim Q(i) t^i$
its Hilbert series. Golod and Shafarevich proved
in~\cite{golod-s:orig} that $H_Q(t)(1-mt+H_{\mathscr R}(t))\ge 1$
holds co\"efficient-wise. Therefore, if there exists a number
$t_0\in(0,1)$ such that $H_{\mathscr R}(t)$ converges at $t_0$ and
$1-mt_0+H_{\mathscr R}(t_0)<0$, then $Q$ is infinite dimensional. An
algebra admitting such a presentation is called a
\emph{Golod-Shafarevich algebra}.

Observe that the conditions `$D(k)\le k^d$ for all $k\in\N$' combined
with `$D(k)=0$ for all $k<\ell(d)$', where $\ell(d)$ is sufficiently
large, together imply that $A/\langle D(k)\colon k\in\N\rangle$ is a
Golod-Shafarevich presentation. We therefore obtain an affirmative
answer to the following
\begin{question}[Zelmanov, private communication]
  Is it true that every Golod-Shafarevich algebra has an
  infinite dimensional homomorphic image of finite Gelfand-Kirillov
  dimension, under the additional assumption that the number of
  relations is small and the relations have sparse degrees?
\end{question}

We remark that Alexander Young obtained related results, but for
special types of ideals with repeated patterns, called
\emph{regimented ideals}. For example a regimented ideal generated by
single $f\in A$ is of type $\bigcap_{1\le i\le\deg f}\sum_{k\in(\deg
  f)\N} A(k)fA$. The ideal $\langle D(k)\colon k\in\N\rangle$ that we
consider does not suffer from such restrictions.

More generally, we may wish to construct algebras of prescribed
growth in which a predetermined set of relations have already been
imposed. In this sense, we are able to achieve finite Gelfand-Kirillov
dimension when the relations are in appropriately separated degrees.

According to Gromov's result mentioned above, if a group has
polynomial growth then its growth function is $\sim n^d$ for an
integer $d$. Grigorchuk showed in~\cite{grigorchuk:growth} that there
exist semigroups of growth strictly between polynomial and
exponential; closely related
examples~\cite{bartholdi-erschler:permutational} have growth
$\exp(n^\alpha)$ for various $\alpha$, accumulating to $1$. One of the
tantalizing open problems is the existence of groups of intermediate
growth strictly between polynomial and $\exp(n^{1/2})$.

Which functions are the growth function of an associative algebra? An
obvious restriction is that the growth function must be
submultiplicative (since any $(m+n)$-fold product of generators can be
factored as an $m$-fold product times an $n$-fold product). For every
real number $\alpha\ge2$, there exists a finitely generated algebra
with Gelfand-Kirillov dimension $\alpha$; see~\cite{krause-l:gkdim}.
We address the question of constructing algebras of superpolynomial
growth:
\begin{mainthm}\label{thm:arbitrary}
  Let $f\colon\N\to\N$ be submultiplicative and increasing, that is,
  $f(m+n)\le f(m)f(n)$ for all $m,n$, and $f(n+1)\ge f(n)$.  Then
  there exists a finitely generated algebra $B$ whose growth function
  $v(n)$ satisfies
  \[f(2^n)\le \dim B(2^n)\le 2^{2n+3}f(2^{n+1}).\]
  Furthermore, $B$ may be chosen to be a monomial algebra.
\end{mainthm}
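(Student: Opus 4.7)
The plan is to construct $B$ as a monomial $\K$-algebra on a finite alphabet $\Sigma$ with $|\Sigma|\ge f(1)$, by specifying a factorial language $L\subseteq\Sigma^*$ of non-zero monomials; then $\dim B(2^n)=|L\cap\Sigma^{2^n}|$. I would build sets $W_n\subseteq\Sigma^{2^n}$ by induction, starting from $W_0=\Sigma$, and at each step picking $W_{n+1}\subseteq W_n\cdot W_n$ with $|W_{n+1}|=\lceil f(2^{n+1})\rceil$. Submultiplicativity guarantees $|W_n\cdot W_n|=|W_n|^2\ge f(2^n)^2\ge f(2^{n+1})$, so the choice is possible. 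Then $L$ is taken to be the closure of $\bigcup_n W_n$ under the subword relation, and $B$ is the corresponding monomial algebra with multiplication $u\cdot v=uv$ if $uv\in L$ and $0$ otherwise; $L$ is factorial by construction, so this is well defined.

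The lower bound $\dim B(2^n)\ge|W_n|\ge f(2^n)$ is immediate since $W_n\subseteq L$. For the upper bound, I would use that iterating $W_{k+1}\subseteq W_k\cdot W_k$ gives $W_m\subseteq W_n^{2^{m-n}}$ for $m\ge n$; hence every length-$2^n$ factor of an element of $W_m$ is a length-$2^n$ factor of some product $uv$ with $u,v\in W_n$ consecutive in the tree decomposition. Each such product contributes at most $2^n+1$ length-$2^n$ factors.

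The main obstacle is that a priori all $|W_n|^2$ pairs $(u,v)\in W_n\times W_n$ could appear as consecutive blocks somewhere in $\bigcup_m W_m$, giving only the crude estimate $(2^n+1)|W_n|^2=(2^n+1)\lceil f(2^n)\rceil^2$, which submultiplicativity alone does not bring down to $2^{2n+3}f(2^{n+1})$. I would address this by refining the inductive construction of $W_{n+1}$, imposing that every cross-boundary overlap at any scale must go through a fixed marker element, or a small fixed subset, of $W_n$, so that the set of pairs actually realized as consecutive blocks has cardinality $O(f(2^{n+1}))$. With this in place, the count of length-$2^n$ subwords becomes $(2^n+1)\cdot O(f(2^{n+1}))\le 2^{2n+3}f(2^{n+1})$, as required.
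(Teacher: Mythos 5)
Your outline correctly identifies the shape of the argument and, crucially, the right obstacle: with an arbitrary choice of $W_{n+1}\subseteq W_n\cdot W_n$, the number of pairs $(u,v)\in W_n\times W_n$ that can appear as consecutive blocks is as large as $|W_n|^2$, and submultiplicativity alone cannot tame this. You also gesture at the correct remedy (force cross-boundary overlaps to pass through a small subset of $W_n$). But that remedy is left entirely unimplemented, and it is precisely the heart of the proof. The paper's construction makes it concrete: it sets $W(2^{n+1})=C(2^n)\,W(2^n)$, where $C(2^n)\subseteq W(2^n)$ is a \emph{prefix} set of cardinality $\lceil f(2^{n+1})/f(2^n)\rceil$. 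The point is then a genuine lemma (Lemma~\ref{growth:recur}), not an assertion: if a length-$2^m$ word $w$ is a subword of some $u_1u_2\in W(2^n)C(2^n)\cup C(2^n)W(2^n)$ with $n>m$, then splitting $u_1=u_{11}u_{12}$ and $u_2=u_{21}u_{22}$ into $W(2^{n-1})$-halves forces $u_{21}\in C(2^{n-1})$ (because $u_2\in W(2^n)=C(2^{n-1})W(2^{n-1})$), so the middle overlap $u_{12}u_{21}$ lies in $W(2^{n-1})C(2^{n-1})$, and one descends by induction all the way to scale $m$. This cascading prefix constraint is what yields the bound $\dim B(2^m)\le 2(2^m+1)\,\#W(2^m)\,\#C(2^m)$.

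Without this, your argument has a real gap, not a cosmetic one. ``Impose that every cross-boundary overlap at any scale goes through a small fixed subset'' is a property one must \emph{prove} propagates down all scales; it does not follow from restricting only the top-level pairs. Two further details that your sketch would need to reconcile: fixing $|W_{n+1}|=\lceil f(2^{n+1})\rceil$ exactly is incompatible with the multiplicative structure $|W(2^{n+1})|=|C(2^n)|\cdot|W(2^n)|$ that the prefix construction imposes, which is why the paper contents itself with $f(2^n)\le\#W(2^n)<2^n f(2^n)$ (Lemma~\ref{growth:W}); and the phrase ``a fixed marker element'' is misleading, since $C(2^n)$ must grow roughly like $f(2^{n+1})/f(2^n)$ and is not of bounded size. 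So: right blueprint and correct diagnosis, but the decisive construction and the recursion lemma that turn the diagnosis into an upper bound are missing.
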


\begin{maincor}[Many growth functions]\label{cor:many}
  Let $f\colon\N\to\N$ be submultiplicative, increasing, and such that
  $f(Cn)\ge nf(n)$ for some $C>0$ and all $n\in\N$. Then there exists
  an associative algebra with growth $\sim f$.
\end{maincor}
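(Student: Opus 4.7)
My strategy is to invoke Theorem~\ref{thm:arbitrary} on the given $f$, obtaining a finitely generated monomial algebra $B$ whose graded dimensions satisfy $f(2^n) \le \dim B(2^n) \le 2^{2n+3} f(2^{n+1})$, and then to upgrade these estimates at powers of~$2$ into an equivalence $v \sim f$ for the full growth function $v(n) = \sum_{k=0}^n \dim B(k)$.

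The decisive tool is the hypothesis $f(Cn) \ge nf(n)$. Iterating it $\ell$ times yields $f(C^\ell n) \ge C^{\ell(\ell-1)/2}\,n^\ell\,f(n)$, so $f$ is super-polynomial, and for every polynomial $p$ there is a constant $C_p$ with $f(C_p n) \ge p(n) f(n)$ for all $n$. Taking $p(n) = 8n^2$ and setting $N = 2^n$, the inequality $\dim B(N) \le 8N^2 f(2N)$ becomes $\dim B(N) \le f(C_1 N)$ for every power of~$2$, with an absolute constant $C_1$.

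For the lower bound on $v$, choose $m$ with $2^m \le n < 2^{m+1}$; then
\[v(n) \ge v(2^m) \ge \dim B(2^m) \ge f(2^m) \ge f(n/2),\]
so $f(n) \le v(2n)$ and $f \precsim v$ is immediate. For the reverse direction I plan to exploit the flexibility in the construction of Theorem~\ref{thm:arbitrary}---for instance, by adjoining a passive letter $z$ that appears in no relation, so that left multiplication by $z$ embeds $B(k)$ into $B(k+1)$---to arrange that $k \mapsto \dim B(k)$ is nondecreasing. Then for $2^m \le n < 2^{m+1}$,
\[v(n) \le (n+1)\dim B(n) \le 2^{m+1}\dim B(2^{m+1}) \le 2^{m+1} f(C_1 \cdot 2^{m+1}) \le f(C_2 n),\]
the last step being one further application of the amplifier to absorb the prefactor $2^{m+1}$. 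Combined with $f \precsim v$, this gives $v \sim f$.

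The main obstacle is transferring a pointwise bound at powers of~$2$ to arbitrary indices: a naive submultiplicativity estimate $\dim B(k) \le \prod_i \dim B(2^{n_i})$ via the binary expansion of $k$ introduces an overhead of size $k^{O(\log k)}$, which grows super-polynomially in $k$ and is \emph{not} absorbed by any fixed number of amplifier iterations. Monotonicity of $\dim B$, arranged either internally within the proof of Theorem~\ref{thm:arbitrary} or externally via the passive-generator trick above, sidesteps this obstruction cleanly.
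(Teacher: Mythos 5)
Your strategy matches the paper's: invoke Theorem~\ref{thm:arbitrary}, then use the amplifier $f(Cn)\ge nf(n)$ (hence $f\sim p(\cdot)f$ for every polynomial $p$) to absorb prefactors, finishing with Lemma~\ref{lemma:2^n}. Your lower bound $f\precsim v$ is fine, and you correctly identify the genuine subtlety that the paper glides over: Theorem~\ref{thm:arbitrary} bounds $\dim B(k)$ only at $k=2^n$, and a binary-expansion/submultiplicativity estimate overshoots badly (roughly $f(2k)^{O(\log k)}$, which no fixed number of amplifier iterations absorbs).

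However, the passive-letter fix fails. If you pass to $B'=\K\langle x_1,\dots,x_d,z\rangle$ modulo the \emph{same} forbidden monomials, then left multiplication by $z$ is indeed injective and $k\mapsto\dim B'(k)$ is nondecreasing, but the growth of $B'$ is not controlled by that of $B$: every word $z^{a_0}w_1z^{a_1}\cdots w_rz^{a_r}$ with each $w_i$ nonzero in $B$ survives in $B'$, so $B'$ can easily have exponential growth even when $B$ is polynomial (for instance $B=\K[x]$ gives $B'=\K\langle x,z\rangle$ free, $\dim B'(n)=2^n$). Thus the inequality $\dim B'(2^{m+1})\le f(C_1\cdot 2^{m+1})$ that you need no longer holds, and the upper bound collapses. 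Your parenthetical ``arranged internally within the proof of Theorem~\ref{thm:arbitrary}'' points in the right direction, but you don't supply the mechanism.

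The correct repair is to look inside the proof of Theorem~\ref{thm:arbitrary}: the argument of Lemma~\ref{growth:recur} only uses that $w$ has degree at most $2^{n-1}$, not that its degree equals $2^m$; hence, as in Lemma~\ref{growth:upper}, every nonzero monomial of degree $k$ with $2^{m-1}<k\le 2^m$ is a subword of some word in $W(2^m)C(2^m)\cup C(2^m)W(2^m)$, giving $\dim B(k)\le 2^{2m+3}f(2^{m+1})$ for \emph{all} such $k$. Summing over $k\le 2^n$ yields $v(2^n)\le \mathrm{poly}(2^n)\,f(2^{n+1})$, which after amplification and Lemma~\ref{lemma:2^n} (applied to the increasing functions $v$ and $n\mapsto f(C'n)$) gives $v\precsim f$. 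That is the content implicitly carried by the paper's terse appeal to Theorem~\ref{thm:arbitrary} and Lemma~\ref{lemma:2^n}.
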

Note that the hypotheses are satisfied by any sufficiently regular
function that grows at least as fast as $n^{\log n}$. The results
in~\cite{trofimov:growth} therefore hold for a very large class of
growth functions.

It remains open whether arbitrary functions between
polynomial and $n^{\log n}$ can be realized as the growth of an
algebra.

The proofs of Theorems~\ref{thm:gosha} and~\ref{thm:arbitrary} are
closely related: in both cases, good control is achieved on certain
subspaces of monomials in degree a power of two, which represent (in a
strong sense) linearly independent elements in the algebra we are
about to construct; then the properties of these subspaces are carried
over to all degrees.

Our construction is elementary, and bears resemblance to Zelmanov's
construction of a prime algebra with a nonzero locally nilpotent
ideal~\cite{zelmanov:prime}. We do not know if our algebras are prime.

\subsection{Notation}
In what follows, $\K$ is an algebraically closed field (this
is needed because we will apply Hilbert's Nullstellensatz). By $A$ we
denote the free $\K$-algebra in two non-commuting indeterminates $x$
and $y$. The set of monomials in $\{x,y\}$ is denoted by $M$ and, for
each $k\geq 0$, its subset of monomials of degree $k$ is denoted by
$M(k)$.  Thus, $M(0)=\{1\}$ and for $k\geq1$ the elements in $M(k)$
are of the form $x_1\cdots x_k$ with all $x_i\in \{x, y\}$. The span
of $M(k)$ in $A$ is denoted by $A(k)$; its elements are called
\emph{homogeneous polynomials of degree $k$}. More generally, for any
subset $X$ of $A$, we denote by $X(k)$ its subset of homogeneous
elements of degree $k$.

The \emph{degree} $\deg f$ of an element $f \in A$ is the least
$k\ge0$ such that $f \in A(0) + \cdots + A(k)$. Any $f\in A$ can be
uniquely written in the form $f=f_0+f_1+\cdots+f_k$ with each $f_i\in
A(i)$. The elements $f_i$ are the \emph{homogeneous components} of
$f$.  A (right, left, two-sided) ideal of $A$ is \emph{homogeneous} if
it is spanned by its elements' homogeneous components.  If $V$ is a
linear space over $\K$, we denote by $\dim V$ the dimension of $V$
over $\K$.  The Gelfand-Kirillov dimension of an algebra $R$ is
written $\GKdim(R)$. For elementary properties of Gelfand-Kirillov
dimension we refer to~\cite{krause-l:gkdim}.

For any real number $x$, define $\lfloor x \rfloor$ as the largest
integer at most $x$, and $\lceil x\rceil$ as the smallest integer at
least $x$. All logarithms are in base $2$.

\subsection{Sketch of the proof of Theorem~\ref{thm:gosha}}
Its proof is an intricate induction, which we broadly explain here. We
are given a collection of subspaces $D(k)\subseteq A(k)$, namely a
graded subspace $D$ of $A$; these are relations that will hold in the
algebra we are about to construct.

We construct auxiliary graded subspaces $U,V,F,\mathscr E$ of $A$; the
spaces $U,V,F$ vanish except in dimensions a power of two. They have
the following informal meaning:
\begin{itemize}
\item elements of $F$ are chunks, with length a power of two, of the
  relations we want to impose; in degrees a power of two, we have
  $ADA\subseteq FA+AF$;
\item elements of $U$ are chunks, with length a power of two, of the
  ideal generated by $F$. They are complemented, in $A$, by the spaces
  $V$. We therefore have, again in degrees a power of two, $A=U\oplus
  V$ and $F\subseteq U$ and $U\subseteq AU+UA$ and $V\subseteq VV$;
\item Roughly speaking, $\mathscr E$ is the largest ideal contained (in degrees a power
  of two) in $AU+UA$; so we have $A\mathscr EA\subseteq AU+UA$.
\end{itemize}

The construction, based on looking at chunks of relations in degrees a
power of two, makes the combinatorics tractable; on the other hand, it
does not allow very tight control on the growth of the resulting
algebra $A/\mathscr E$. It is nevertheless sufficient to control the
Gelfand-Kirillov dimension of $A/\mathscr E$, which has a basis made of
products of appropriate monomials from a basis of $V$.

The following is a more precise road map of where the various spaces
$U,V,F,\mathscr E$ are constructed, and where their various properties
are proven:
\begin{itemize}
\item in~\S\ref{ss:F}, subspaces $F(2^n)$ of $A(2^n)$ are constructed,
  depending on splittings $U(2^m)\oplus V(2^m)=A(2^m)$ for $m<n$ and
  on the subspaces $D(k)$ for $2^n+2^{n-1}\le k\le
  2^n+2^{n-1}+2^{n-2}$. Roughly speaking, for $k\ge 2^n$, elements of
  degree $2^{n+1}$ in $AD(k)A$ are contained in
  $F(2^n)A(2^n)+A(2^n)F(2^n)$.
\item in~\S\ref{ss:UV}, subspaces $U(2^n),V(2^n)\subseteq A(2^n)$ are
  constructed, depending on $U(2^m),V(2^m)$ for $m<n$ and on
  $F(2^n)$. This part relies heavily on previous results
  from~\cite{lenagan-s:nillie}. Among other properties, they satisfy
  $V(2^{n-1})^2\subseteq V(2^n)$.
\item still in~\S\ref{ss:UV}, for all $k\in\{2^{n-1},\dots,2^n-1\}$,
  we set
  \[\mathscr E(k)=\{r\in A(k)\mid ArA\cap A(2^{n+1})\subseteq U(2^n)A(2^n)+A(2^n)U(2^n)\}\]
  and $\mathscr E=\bigoplus_k\mathscr E(k)$. The desired quotient is
  then $A/\mathscr E$.
\item in~\S\ref{ss:GK}, we bound the Gelfand-Kirillov dimension of
  $A/\mathscr E$.
\end{itemize}

Let us now give a concrete example how the construction works using
simple sets $V(2^n)$, $U(2^n)$. We take $A=\K\langle x,y\rangle$, and
for all $n\in\N$, set $V(2^n)=\K x^{2^n}+\K y^{2^n}$, and let
$F(2^n)=U(2^n)$ be spanned by all monomials of degree $2^n$ which
contain both $x$ and $y$. By the definition of $\mathscr E$, we see
that $x^{2^n}y^{2^n}\notin\mathscr E$ for all $n\in\N$, because
$x^{2^n}y^{2^n}\notin U(2^n)A(2^n)+A(2^n)U(2^n)$; similarly
$y^{2^n}x^{2^n}\notin\mathscr E$.  Let $\langle x \rangle, \langle y
\rangle$, denote the ideals generated respectively by $x$ and $y$ in
$A$. Observe that, by definition of $A$, both $\langle x \rangle
\langle y \rangle \langle x \rangle$ and $\langle y \rangle \langle x
\rangle \langle y \rangle$ are contained in $\mathscr E$.  It follows
that $A/\mathscr E=A/(\langle x \rangle \langle y \rangle \langle x
\rangle+\langle y \rangle \langle x \rangle \langle y \rangle)$ is a
complete description of this example. The situation when sets $U(2^n)$
are not spanned by monomials is usually more complicated.

The proof of Theorem~\ref{thm:gosha} and Theorem~\ref{cor:gosha}  is concluded
in~\S\ref{ss:proof}. We prove Theorem~\ref{thm:arbitrary} and its
corollary in~\S\ref{ss:arbitrary}.

\section{Constructing $U(2^n),V(2^n)$ from $F(2^n)$}\label{ss:UV}
We begin with a modification of
\cite{lenagan-s:nillie}*{Theorem~3}. In~\S\ref{ss:GK}, we will find
bounds on the Gelfand-Kirillov dimension of algebras constructed using
the subspaces $U(2^n)$, $V(2^n)$ described in this~\S.  We will
use Conditions~\eqref{prop:2} and~\eqref{prop:3} of
Theorem~\ref{8props} in~\S\ref{ss:GK}--\ref{ss:F}.

We assume throughout the paper that subspaces $D(k)$ of $A(k)$ and an
integer $d$ are fixed, and define
\begin{align}
  e(n)&=\lfloor \log(5dn)\rfloor,\label{def:e}\\
  Y&=\{n\in\N\colon D(k)\neq0\text{ with }2^n+2^{n-1}\le k\le 2^n+2^{n-1}+2^{n-2}\},\label{def:Y}\\
  S&=\bigsqcup_{n\in Y}\{n-e(n)-1,\dots,n-1\}.\label{def:S}
\end{align}

\begin{lemma}
  Under the assumptions of Theorem~\ref{thm:gosha}, the union defining
  $S$ above is indeed disjoint and contained in $\N$.
\end{lemma}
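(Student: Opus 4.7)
The plan is to verify the two assertions separately: first, that for each $n\in Y$ the integer $n-e(n)-1$ is non-negative (so $S\subseteq\N$); second, that the intervals $\{n-e(n)-1,\dots,n-1\}$ indexed by different $n\in Y$ are pairwise disjoint (so the union in~\eqref{def:S} really is disjoint). Both steps amount to extracting numerical consequences of Conditions~\eqref{gosha:1} and~\eqref{gosha:3}.

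For the first assertion, I would fix $n\in Y$ and pick $k$ in the prescribed range $2^n+2^{n-1}\le k\le 2^n+2^{n-1}+2^{n-2}$ with $D(k)\ne 0$. Condition~\eqref{gosha:1} forces $k\ge 18d\log k$. Since $k>2^n$ gives $\log k>n$, and $k<2^{n+1}$, this yields $2^{n+1}>18dn$, hence $2^n>9dn>5dn$. Taking logarithms then gives $n>\log(5dn)\ge e(n)$, and since both $n$ and $e(n)$ are integers we conclude $n-e(n)-1\ge 0$.

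For disjointness, suppose $n_1<n_2$ both lie in $Y$, and pick corresponding $k_1,k_2$. It suffices to show $n_2-n_1>e(n_2)$, since this forces $n_2-e(n_2)-1\ge n_1>n_1-1$. The crucial move is to apply Condition~\eqref{gosha:3} with the choices $j:=k_1$ and $n:=n_2$: since $D(k_1)\ne 0$ and $k_1<2^{n_1+1}\le 2^{n_2}<k_2$, the non-vanishing of $D(k_2)$ forces $k_2\ge 12dn_2 k_1$. Combining this with $k_1>2^{n_1}$ and $k_2<2^{n_2+1}$ yields $2^{n_2-n_1+1}>12dn_2$, i.e.\ $n_2-n_1>\log(6dn_2)>\log(5dn_2)\ge e(n_2)$.

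The only minor subtlety I foresee lies in the choice $n:=n_2$ in Condition~\eqref{gosha:3}; the more naive substitution $n:=n_1+1$ (the smallest admissible value) would only produce $2^{n_2-n_1}>6d(n_1+1)$, which need not dominate $5dn_2$ when $n_2\gg n_1$. Coupling $j=k_1$ with $n=n_2$ is what makes the resulting lower bound scale with $n_2$, matching the definition of $e(n_2)$.
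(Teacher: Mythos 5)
Your proof is correct and takes essentially the same route as the paper's: both derive $n>e(n)$ from Condition~\eqref{gosha:1} via the estimate $2^n\gtrsim dn$, and both apply Condition~\eqref{gosha:3} with $j$ taken from the smaller index and $n$ taken as the larger index to force the gap $n_2-n_1>e(n_2)$. The only stylistic difference is that you argue forward (deriving the bounds directly from non-vanishing of $D$) whereas the paper argues by contradiction; the numerical content is identical.
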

\begin{proof}
  Assume first that $n-e(n)-1<0$ for some $n\in Y$; so
  $n-1<\log(5dn)$, and $2^n<10dn$.  Then there is $k\in\N$ with
  $D(k)\neq0$ and $\frac32 2^n\le k\le \frac74 2^n$. Therefore,
  \[k<\tfrac74 10dn,\text{ and }k/\log k<18d,\]
  contradicting~\eqref{gosha:1}.

  Assume that the union is not disjoint; then there are $m<n$ in $Y$
  such that $n-e(n)-1\le m-1$; so there are $j<k$ with $D(j)\neq0$,
  $D(k)\neq0$ and $\frac32 2^m\le j\le \frac74 2^m$ and $\frac32
  2^n\le k\le \frac74 2^n$. Therefore,
  \[j<2^n<k\le 2^{e(n)}\tfrac74 2^m\le 10dn\tfrac76j\le 12dnj,\]
  contradicting~\eqref{gosha:3}.
\end{proof}

The following theorem encapsulates the construction of homogeneous
spaces $U(2^n),V(2^n)$ out of appropriate homogeneous spaces
$F(2^n)$. In its first part, it describes properties satisfied by
these spaces, assuming that all $F(2^n)$ are given. The construction
is in fact inductive, and the second part claims that the construction
of $U(2^n),V(2^n)$ only depends on the $F(2^m)$ for $m\le n$ and the
$U(2^m),V(2^m)$ for $m<n$; once the $U(2^m),V(2^m)_{m<n}$ have been
constructed inductively, it is not necessary to modify them.

\begin{theorem}\label{8props}
  Let $Y$ be as in~\eqref{def:Y} and let $S$ be as
  in~\eqref{def:S}. Let an integer $n$ be given.  Suppose that, for
  every $m\le n$, we are given a subspace $F(2^m)\subseteq A(2^m)$
  with $\dim F(2^m)\le (2^{2^{e(m)}})^2-2$ and that, for every $m<n$,
  we are given subspaces $U(2^m),V(2^m)$ of $A(2^m)$ with
  \begin{enumerate}\renewcommand\theenumi{\ref{8props}.\roman{enumi}}
  \item $\dim V(2^m)=2$ if $m\notin S$;\label{prop:1}
  \item $\dim V(2^{m-e(m)-1+j})=2^{2^j}$ for all $m\in Y$ and all $0\le
    j\le e(m)$;\label{prop:2}
  \item $V(2^m)$ is spanned by monomials;\label{prop:3}
  \item $F(2^m)\subseteq U(2^m)$ for every $m\in Y$, and $F(2^{m})=0$
    for every $m\notin Y$;\label{prop:4}
  \item $V(2^m)\oplus U(2^m)=A(2^m)$;\label{prop:5}
  \item $A(2^{m-1})U(2^{m-1})+U(2^{m-1})A(2^{m-1})\subseteq U(2^m)$;\label{prop:6}
  \item $V(2^m)\subseteq V(2^{m-1})V(2^{m-1})$.\label{prop:7}\label{prop:last}
  \end{enumerate}

  Then there exist subspaces $U(2^n),V(2^n)$ of $A(2^n)$ such that the
  extended collection $U(2^m),V(2^m)_{m\le n}$ still satisfies
  Conditions~(\ref{prop:1}--\ref{prop:last}).
\end{theorem}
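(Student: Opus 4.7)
The plan is to proceed by induction on $n$: assume we are given $F(2^n)$ and the collection $U(2^m),V(2^m)$ for $m<n$ satisfying (i)--(vii). I would construct $V(2^n)$ as a monomial-spanned subspace of the product $V(2^{n-1})V(2^{n-1})$ of the prescribed dimension, and then let $U(2^n)$ be any linear complement of $V(2^n)$ in $A(2^n)$ containing the subspace
\[W := F(2^n) + A(2^{n-1})U(2^{n-1}) + U(2^{n-1})A(2^{n-1}).\]
Conditions (iii)--(vii) then follow directly from this construction; the task reduces to finding such a $V(2^n)$ of the correct dimension, which by (i) and (ii) is $2$ if $n\notin S$, and $2^{2^j}$ if $n=m-e(m)-1+j\in S$ for some $m\in Y$ and $0\le j\le e(m)$.

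Since $V(2^{n-1})$ is spanned by monomials by (iii), so is the product $V(2^{n-1})V(2^{n-1})$, and a direct case analysis on the position of $n-1$ shows that $\dim V(2^{n-1})^2=(\dim V(2^{n-1}))^2$ is always at least the target dimension for $V(2^n)$, with equality precisely when $n\in S$ with $j>0$ (the ``middle of a block'' case). The key observation is
\[V(2^{n-1})V(2^{n-1})\,\cap\,\bigl(A(2^{n-1})U(2^{n-1})+U(2^{n-1})A(2^{n-1})\bigr)=0,\]
which I would prove by extending a monomial basis of $V(2^{n-1})$ to an arbitrary basis of $A(2^{n-1})$ via (v) at level $n-1$: the pairwise products of these basis elements form a basis of $A(2^n)$ that cleanly separates $V^2$ from $AU+UA$. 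Hence the required condition $V(2^n)\cap W=0$ simplifies to $V(2^n)\cap F(2^n)=0$.

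I would then split into three subcases. First, if $n\in S$ or $n-1\notin S$, I claim $F(2^n)=0$: in the former case this uses $Y\cap S=\emptyset$, proved by an argument parallel to the disjointness lemma for $S$ and using condition (\ref{gosha:3}); in the latter it uses $n\in Y\Rightarrow n-1\in S$, since the block defining $Y$-membership of $n$ ends at $n-1$. Either way (iv) gives $F(2^n)=0$, and any choice of the right number of monomials from $V(2^{n-1})^2$ does the job. The remaining subcase is $n\notin S$ with $n-1$ the top of the block of some $m\in Y$, so $n=m$ and $\dim V(2^{n-1})^2=(2^{2^{e(m)}})^2$; here the bound $\dim F(2^n)\le (2^{2^{e(n)}})^2-2$ forces the image of $V(2^{n-1})^2$ in $A(2^n)/F(2^n)$ to have dimension at least $2$, so at least two monomial generators of $V(2^{n-1})^2$ project to linearly independent classes and their preimages span the desired $V(2^n)$. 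The main obstacle is precisely this last subcase, where the dimensions of $F(2^n)$ and $V(2^{n-1})^2$ agree up to an additive $2$, and the entire argument hinges on the ``$-2$'' slack in the hypothesis on $\dim F(2^m)$.
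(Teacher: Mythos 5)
Your overall plan is the same as the paper's: an inductive construction of $V(2^n)$ as a subspace of $V(2^{n-1})^2$, with $U(2^n)$ a complement containing $W=F(2^n)+A(2^{n-1})U(2^{n-1})+U(2^{n-1})A(2^{n-1})$, and a case split with the critical case $n\in Y$ handled by the ``$-2$'' slack. Your auxiliary observations ($Y\cap S=\emptyset$, $n\in Y\Rightarrow n-1\in S$, the direct-sum decomposition $A(2^n)=V(2^{n-1})^2\oplus\bigl(A(2^{n-1})U(2^{n-1})+U(2^{n-1})A(2^{n-1})\bigr)$) are all correct and all used implicitly or explicitly in the paper's proof.

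There is, however, one genuine logical gap. You assert that, because $V(2^{n-1})^2\cap\bigl(A(2^{n-1})U(2^{n-1})+U(2^{n-1})A(2^{n-1})\bigr)=0$, ``the required condition $V(2^n)\cap W=0$ simplifies to $V(2^n)\cap F(2^n)=0$.'' This inference is not valid: $W$ is the \emph{sum} $F(2^n)+\bigl(AU+UA\bigr)$, and an element $v\in V(2^n)$ can lie in this sum as $v=f+u$ with $f\in F(2^n)$, $u\in AU+UA$, $f\neq v$, even if $v\notin F(2^n)$. Writing $\pi$ for the projection of $A(2^n)$ onto $V(2^{n-1})^2$ along $AU+UA$, the correct reduction is $V(2^n)\cap W=V(2^n)\cap\pi\bigl(F(2^n)\bigr)$. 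Your later step --- picking monomials with linearly independent images in $A(2^n)/F(2^n)$ --- is therefore testing the wrong condition: one can easily build $F$ for which two monomials are independent mod $F$ yet their span meets $W$ (e.g.\ if $m_1-m_2+u\in F$ for some $u\in AU+UA$, then after absorbing $u$ into $W$ the span of $m_1,m_1+m_3$ meets $W$ even though $m_1,m_3$ are independent mod $F$). The fix is straightforward --- replace $F(2^n)$ by $\pi(F(2^n))$ in that paragraph; since $\dim\pi(F(2^n))\le\dim F(2^n)\le\dim V(2^{n-1})^2-2$, the same dimension count still delivers the two monomials --- and it is exactly what the paper does when it first decomposes each $f_j\in F$ as $\bar f_j+g_j$ with $\bar f_j\in V(2^{n-1})^2$ and then works with the span $P$ of the $\bar f_j$ rather than with $F$ itself.
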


The proof is very similar to the proof of
\cite{lenagan-s:nillie}*{Theorem~3}. We nevertheless include it for
completeness.

\begin{proof}
  We begin the inductive construction by setting $V(2^0)=\K x+\K y$
  and $U(2^0)=0$. Assume that we have defined $V(2^m)$ and $U(2^m)$
  for all $m\leq n$ in such a way that
  Conditions~(\ref{prop:1}--\ref{prop:last}) hold for all $m\leq
  n$. Define then $V(2^{n+1})$ and $U(2^{n+1})$ in the following
  way. We have three cases:
  \begin{itemize}
  \item[1.] $n\in S$ and $n+1\in S$;
  \item[2.] $n\notin S$;
  \item[3.] $n\in S$ and $n+1\notin S$.
  \end{itemize}

  \newcommand{\listlabel}[1]{ \bfseries #1.}
  \begin{list}{}{\itemindent=0em\leftmargin=0em\parsep=1ex
      \let\makelabel\listlabel}
  \item[Case 1]
  If $n\in S$ and $n+1\in S$, define
  \[U(2^{n+1}) = A(2^n)U(2^n) + U(2^n)A(2^n)\text{ and }
  V(2^{n+1})=V(2^n)V(2^n).
  \]
  Conditions~(\ref{prop:6},\ref{prop:7}) certainly hold. Since
  Conditions~(\ref{prop:3},\ref{prop:5}) hold for $U(2^n)$ and
  $V(2^n)$, they hold for $U(2^{n+1})$ and $V(2^{n+1})$ as
  well. Moreover, $\dim V(2^n)=(\dim V(2^n))^2$, inductively
  satisfying Condition~(\ref{prop:2}). Condition~(\ref{prop:4}) holds
  trivially since $n+1\notin Y$.

  \item[Case 2]
  Suppose that $n\notin S$. Then $\dim V(2^n)=2$, and is
  generated by monomials, by the inductive hypothesis. Let $m_1, m_2$
  be two monomials that generate $V(2^n)$. Then $V(2^n)V(2^n)=\K
  m_1m_1 + \K m_1m_2 + \K m_2m_1 + \K m_2m_2$. Set
  \[V(2^{n+1})=\K m_1m_1 + \K m_1m_2,
  \]
  so that Conditions~(\ref{prop:1},\ref{prop:3},\ref{prop:7}) hold, and
  \[U(2^{n+1}) = A(2^n)U(2^n) + U(2^n)A(2^n) + m_2 V(2^n).
  \]
  Using this definition, Condition~\eqref{prop:6} holds and
  \begin{align*}
    A(2^{n+1}) &= A(2^n)A(2^n)\\
    &=U(2^n)U(2^n) \oplus U(2^n)V(2^n) \oplus V(2^n)U(2^n)
    \oplus m_1 V(2^n) \oplus m_2 V(2^n)\\
    &=U(2^{n+1}) \oplus V(2^{n+1}),
  \end{align*}
  so Condition~\eqref{prop:5} also holds. Condition~\eqref{prop:4}
  holds trivially since $n+1\notin Y$. Condition~\eqref{prop:2} holds
  because if $n+1\notin S$ then $\dim V(2^{n+1})=2$ as required, and
  if $n+1\in S$ then $n+1=m-e(m)-1$ for some $m\in Y$, using $n\notin
  S$; and then $\dim V(2^{n+1})=\dim V(2^{m-e(m)-1+0})=2^{2^{0}}=2$ as
  required.

  \item[Case 3]
  Suppose that $n\in S$ while $n+1\notin S$. Then
  $n+1\in Y$. Induction using Condition~\eqref{prop:2} gives $\dim
  V(2^n)= \dim V(2^{n+1-e(n+1)-1+e(n+1)})=2^{2^{e(n+1)}}$, and $\dim
  V(2^n)V(2^n)=(2^{2^{e(n+1)}})^{2}$. Induction using
  Condition~\eqref{prop:5} gives
  \[A(2^{n+1}) = U(2^n)U(2^n) \oplus U(2^n)V(2^n)
  \oplus V(2^n)U(2^n) \oplus    V(2^n)V(2^n).
  \]

  Let $\{f_{1}, \dots ,f_{s}\}$ be a basis of $F(2^{n+1})$, for some
  $f_{1},\dots ,f_{s}\in A(2^{n+1})$ and $s\leq
  (2^{2^{e(n+1)}})^2-2$. Each $f_j$ can be uniquely decomposed into
  $\bar{f}_j+g_j$ with $\bar {f}_j \in V(2^n)V(2^n)$ and $g_{j}\in
  V(2^n)U(2^n) +U(2^n)U(2^n)+U(2^n)V(2^n)$. Let $P$ the subspace
  spanned by $\bar{f}_1,\dots, \bar{f}_s$.

  Since $\dim_\K P \leq s = \dim F(2^{n+1}) \leq (2^{2^{e(n+1)}})^2-2=
  V(2^n)V(2^n) - 2$, there exist at least two monomials $m_1, m_2 \in
  V(2^n)V(2^n)$ such that the space $\K m_1\oplus\K m_2$ is disjoint
  from $P$.  Define
  \[V(2^{n+1})=\K m_1\oplus\K m_2;
  \]
  this space satisfies
  Conditions~(\ref{prop:1},\ref{prop:3},\ref{prop:7}).
  Since $P$ is disjoint from $\K m_1 + \K m_2$, there exists a space
  $Q \supseteq P$ such that $V(2^n)V(2^n) = Q \oplus (\K m_1+\K m_2)$.
  Set
  \[U(2^{n+1}) = U(2^n)U(2^n) + U(2^n)V(2^n) + V(2^n)U(2^n) + Q.\]
  This immediately satisfies Conditions~(\ref{prop:5},\ref{prop:6}).
  Since each polynomial $f_j = g_j + \bar{f}_j$ belongs to
  $U(2^{n+1})$, the set $U(2^{n+1})$ satisfies
  Condition~\eqref{prop:4} as well. Condition~\ref{prop:2} is
  satisfied trivially since $n+1\in Y$, hence $n+1\notin S$.
  \end{list}
\end{proof}

Let subspaces $U(2^n)$, $V(2^n)$ satisfying
Conditions~(\ref{prop:1}--\ref{prop:last}) of Theorem~\ref{8props} be given,
for all $n\in\N$. We then define a graded subspace $\mathscr E$ of $A$
by constructing its homogeneous components $\mathscr E(k)$ as
follows. Given $k\in\N$, let $n\in\N$ be such that $2^{n-1}\le
k<2^n$. Then $r\in\mathscr E(k)$ precisely if, for all
$j\in\{0,\dots,2^{n+1}-k\}$, we have $A(j)rA(2^{n+1}-j-k)\subseteq
U(2^n)A(2^n)+A(2^n)U(2^n)$. More compactly,
\begin{equation}\label{def:E}
  \mathscr E(k)=\{r\in A(k)\mid ArA\cap A(2^{n+1})\subseteq U(2^n)A(2^n)+A(2^n)U(2^n)\};
\end{equation}
and then $\mathscr E=\bigoplus_{k\in\N}\mathscr E(k)$. We quote
\begin{lemma}[\cite{lenagan-s:nillie}*{Theorem~5}]
  The set $\mathscr E$ is an ideal in $A$.
\end{lemma}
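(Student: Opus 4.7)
The plan is to reduce showing that $\mathscr E$ is an ideal to a single uniform-in-scale statement about elements of $\mathscr E$, and prove that statement by induction on the scale. By linearity and homogeneity, it suffices to show that $ar$ and $ra$ lie in $\mathscr E$ for each homogeneous $a \in A$ and each $r \in \mathscr E(k)$, with $2^{n-1} \le k < 2^n$. Absorbing $a$ into the outer factors, this is equivalent to the following key claim: for every $N \ge n$ and all $c_1, c_2 \in A$ with $|c_1|+k+|c_2|=2^{N+1}$, one has $c_1 r c_2 \in U(2^N)A(2^N)+A(2^N)U(2^N)$. I prove this by induction on $N$; the base $N=n$ is exactly the definition of $\mathscr E(k)$.

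For the inductive step, I split according to where the cut at position $2^N$ falls in $c_1 r c_2$. If $|c_1|\ge 2^N$, write $c_1=c_1'c_1''$ with $|c_1'|=2^N$; then $|c_1''|+k+|c_2|=2^N$, and the inductive hypothesis at level $N-1$ combined with Condition~(\ref{prop:6}) yields $c_1''rc_2\in U(2^N)$, so $c_1rc_2\in A(2^N)U(2^N)$. The case $|c_2|\ge 2^N$ is symmetric. The remaining main case is $|c_1|,|c_2|<2^N$: using $k<2^n\le 2^{N-1}$ (as $N>n$), both lengths must exceed $2^{N-1}$, and the cut falls strictly inside $r$. I split each of $c_1,c_2$ at its outer $2^{N-1}$-boundary via the vector-space isomorphism $A(p)\cong A(2^{N-1})\otimes A(p-2^{N-1})$ to write
\[c_1 r c_2 = \sum_{i,j} c_1^{L,(i)}\,m_{ij}\,c_2^{R,(j)},\]
with $|c_1^{L,(i)}|=|c_2^{R,(j)}|=2^{N-1}$ and $m_{ij}=c_1^{R,(i)}\,r\,c_2^{L,(j)}\in A(2^N)$ satisfying $|c_1^{R,(i)}|+k+|c_2^{L,(j)}|=2^N$.

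The delicate point, and the main obstacle, is the form in which the inductive hypothesis controls $m_{ij}$. It would be insufficient merely to know $m_{ij}\in U(2^N)$, because $U(2^N)$ may contain a nontrivial piece of $V(2^{N-1})V(2^{N-1})$ (the complement $Q$ arising in Theorem~\ref{8props}, Case~3), and such a piece need not behave well under multiplication by $c_1^{L,(i)}$ and $c_2^{R,(j)}$. The inductive hypothesis at level $N-1$, however, places $m_{ij}$ directly in $U(2^{N-1})A(2^{N-1})+A(2^{N-1})U(2^{N-1})$; equivalently, under $A(2^N)\cong A(2^{N-1})\otimes A(2^{N-1})$, the $V(2^{N-1})\otimes V(2^{N-1})$ component of $m_{ij}$ vanishes, killing any potential $Q$-contribution. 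One may therefore write each $m_{ij}$ as a sum of products $vu$, $uv$, or $uu'$ with $v\in V(2^{N-1})$ and $u,u'\in U(2^{N-1})$. For each such summand, regrouping
\[c_1^{L,(i)}(vu)c_2^{R,(j)}=(c_1^{L,(i)}v)(u\,c_2^{R,(j)}),\]
and analogously for the other shapes, produces one factor of the form $A(2^{N-1})U(2^{N-1})$ or $U(2^{N-1})A(2^{N-1})$, which lies in $U(2^N)$ by Condition~(\ref{prop:6}). Every summand therefore lies in $U(2^N)A(2^N)+A(2^N)U(2^N)$, completing the inductive step.
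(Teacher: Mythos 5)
Your argument is correct, and it supplies a self-contained proof of a fact the paper only cites from~\cite{lenagan-s:nillie}*{Theorem~5}, so there is nothing in the text to compare it against line by line. The reduction to the uniform claim is standard (since $\mathscr E$ is graded and multiplication by a homogeneous $a$ only enlarges the flanking factor $c_1$ or $c_2$, and the relevant $N$ satisfies $N\ge n$ because $k+\ell\ge k\ge2^{n-1}$), the base case $N=n$ is literally the definition of $\mathscr E(k)$, and the three cases of the inductive step are exhaustive and each closes cleanly. In Case~3 the inequalities $2^{N-1}<|c_1|,|c_2|<2^N$ indeed follow from $|c_1|+|c_2|=2^{N+1}-k>2^N+2^{N-1}$ together with $|c_1|,|c_2|<2^N$, so the split of $c_1$ and $c_2$ at their outer $2^{N-1}$-boundaries is well defined, and $m_{ij}=c_1^{R,(i)}rc_2^{L,(j)}$ lands in $A(2^N)$ with the right degree budget for the inductive hypothesis. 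You correctly isolate the only delicate point: it would not suffice to know $m_{ij}\in U(2^N)$, since $U(2^N)$ may contain a $Q\subseteq V(2^{N-1})V(2^{N-1})$ piece; what the inductive hypothesis actually gives, $m_{ij}\in U(2^{N-1})A(2^{N-1})+A(2^{N-1})U(2^{N-1})=UU+UV+VU$ (no $VV$ component), is exactly what makes the regrouping $(c_1^{L}v)(uc_2^{R})$, $(c_1^{L}u)(vc_2^{R})$, $(c_1^{L}u)(u'c_2^{R})$ work, with Condition~\eqref{prop:6} absorbing the $A(2^{N-1})U(2^{N-1})$ or $U(2^{N-1})A(2^{N-1})$ factor into $U(2^N)$. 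I see no gap.
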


We now extend the definition of $U(2^n),V(2^n)$ to dimensions that are
not powers of $2$. The sets~(\ref{def:U<}--\ref{def:V>}) are called
respectively $S,W,R,Q$ in~\cite{lenagan-s:nillie}*{\S4}.

Let $k\in\N$ be given. Write it as a sum of increasing powers of $2$,
namely $k=\sum_{i=1}^t 2^{p_i}$ with $0\le p_1 < p_2 < \ldots <
p_t$. Set then
\begin{align}
  U^<(k) &= \sum_{i=0}^t A(2^{p_1}+\cdots+2^{p_{i-1}})U(2^{p_i})A(2^{p_{i+1}}+\cdots+2^{p_t}),\label{def:U<}\\
  V^<(k) &= V(2^{p_1})\cdots V(2^{p_t}),\label{def:V<}\\
  U^>(k) &= \sum_{i=0}^t A(2^{p_t}+\cdots+2^{p_{i+1}})U(2^{p_i})A(2^{p_{i-1}}+\cdots+2^{p_1}),\\
  V^>(k) &= V(2^{p_t})\cdots V(2^{p_1}).\label{def:V>}
\end{align}

\begin{lemma}[\cite{lenagan-s:nillie}*{pp.~993--994}]\label{lem:A=U+V}
  For all $k\in\N$ we have $A(k)=U^<(k)\oplus V^<(k)=U^>(k)\oplus
  V^>(k)$.

  For all $k,\ell\in\N$ we have $A(k)U^<(\ell)\subseteq U^<(k+\ell)$ and $U^>(k)A(\ell)\subseteq U^>(k+\ell)$.
\end{lemma}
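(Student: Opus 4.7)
For the decomposition $A(k)=U^<(k)\oplus V^<(k)$, I would induct on $t$, the number of $1$-bits in the binary expansion of $k=\sum_{i=1}^t 2^{p_i}$. The base case $t=1$ is precisely Condition~\ref{prop:5} of Theorem~\ref{8props}. For the inductive step, set $k'=k-2^{p_t}$; since $A$ is free, multiplication yields a linear isomorphism $A(k')\otimes_\K A(2^{p_t})\cong A(k)$, because basis monomials factor uniquely as a length-$k'$ prefix times a length-$2^{p_t}$ suffix. Combining the inductive splitting $A(k')=U^<(k')\oplus V^<(k')$ with $A(2^{p_t})=U(2^{p_t})\oplus V(2^{p_t})$ yields a four-way direct sum of $A(k)$. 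The all-$V$ summand $V^<(k')V(2^{p_t})$ equals $V^<(k)$ by~\eqref{def:V<}, and expanding~\eqref{def:U<} while splitting off the $i=t$ term identifies $U^<(k)=U^<(k')A(2^{p_t})+A(k')U(2^{p_t})$, which is exactly the sum of the three remaining summands. The decomposition $A(k)=U^>(k)\oplus V^>(k)$ follows by the symmetric argument using the factorization $A(k)=A(2^{p_t})\cdots A(2^{p_1})$.

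For the inclusion $A(k)U^<(\ell)\subseteq U^<(k+\ell)$, by linearity and the defining formula~\eqref{def:U<} of $U^<(\ell)$ it suffices to prove the stronger three-piece statement
\[A(M)U(2^p)A(N)\subseteq U^<(M+2^p+N)\qquad\text{for all }M,p,N\ge 0.\]
I would prove this by strong induction on $M+N$. The base $M=N=0$ gives $U(2^p)\subseteq U^<(2^p)$, immediate from the definition. For the inductive step, the central tool is Condition~\ref{prop:6}, used as a binary carry: if $2^p$ is a bit of $M$, write $M=M'+2^p$ to obtain
\[A(M)U(2^p)A(N)=A(M')A(2^p)U(2^p)A(N)\subseteq A(M')U(2^{p+1})A(N),\]
which by the induction hypothesis ($M'+N<M+N$) lies in $U^<(M'+2^{p+1}+N)=U^<(M+2^p+N)$; symmetrically if $2^p$ is a bit of $N$. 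If neither $M$ nor $N$ contains $2^p$ but some bit of $M$ or $N$ exceeds $p$, a free-algebra split $A(2^r)=A(2^{r-1})A(2^{r-1})$ on the side adjacent to $U(2^p)$ creates, after finitely many iterations, a matching bit that triggers a carry. Otherwise every bit of $M$ and $N$ is strictly less than $p$, and then $A(M)U(2^p)A(N)$ appears literally as one summand of~\eqref{def:U<} for $U^<(M+2^p+N)$. The dual statement $U^>(k)A(\ell)\subseteq U^>(k+\ell)$ follows by the mirror argument, using the right-sided half of Condition~\ref{prop:6}.

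The main obstacle is the combinatorial bookkeeping of iterated carries. Each application of Condition~\ref{prop:6} strictly decreases $M+N$, guaranteeing termination of the carry chain; the intermediate free-algebra splits do not change $M+N$ but halve the exponent of the bit being targeted, so the whole procedure terminates after $O(\log(k+\ell))$ steps. A direct check on binary expansions then confirms that the final configuration matches a summand of~\eqref{def:U<}.
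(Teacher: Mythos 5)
The first half of your argument (the direct-sum decomposition $A(k)=U^<(k)\oplus V^<(k)$) is fine: peeling off the top bit and tensoring with the degree-$2^{p_t}$ component is the natural induction, and the bookkeeping you do to identify the three \emph{$U$-containing} summands with $U^<(k')A(2^{p_t})+A(k')U(2^{p_t})=U^<(k)$ is correct.

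The second half, however, contains a genuine gap. The ``stronger three-piece statement''
\[A(M)U(2^p)A(N)\subseteq U^<(M+2^p+N)\qquad\text{for all }M,p,N\ge 0\]
is \emph{false}, and its failure is exactly in your case~4. Take $M=0$, $p=1$, $N=1$, with the base-case choices of Theorem~\ref{8props}: $V(2^0)=\K x+\K y$, $U(2^0)=0$, then (Case~2 of the construction) $V(2)=\K xx+\K xy$ and $U(2)=\K yx+\K yy$. Then $A(0)U(2)A(1)=U(2)A(1)$ is spanned by $yxx,yxy,yyx,yyy$, while $U^<(3)=U(1)A(2)+A(1)U(2)=A(1)U(2)$ is spanned by $xyx,xyy,yyx,yyy$; the monomial $yxx$ lies in the former but not the latter. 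This is precisely the situation where all bits of $M$ and $N$ are below $p$, and your claim that ``$A(M)U(2^p)A(N)$ appears literally as one summand of~\eqref{def:U<}'' is wrong whenever $N\ne0$: in the definition of $U^<$ the degree factor to the \emph{right} of $U(2^{p_i})$ carries only the high bits, not the low ones, so $A(N)$ with $0<N<2^p$ does not fit.

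The issue is the asymmetry built into the definitions, which the lemma deliberately respects: $U^<$ is closed under left multiplication by $A$, $U^>$ under right multiplication. When you expand $A(k)U^<(\ell)$, the right-hand degree factor in every summand is $A(2^{p_{i+1}}+\cdots+2^{p_t})$, whose bits are all strictly above $p_i$. So the statement you actually need to prove by carrying is the \emph{restricted} one where $N\equiv 0\pmod{2^{p+1}}$. That version is true and does make your case~4 a literal summand (since then $N$ is empty below position $p$), but it also requires a little more care with the induction: after a left carry $A(M)U(2^p)A(N)\subseteq A(M-2^p)U(2^{p+1})A(N)$, the invariant may fail, because $N$ might have bit $p+1$; you must then keep carrying to the right via $U(2^{p+1})A(2^{p+1})\subseteq U(2^{p+2})$ and so on until the low part of $N$ is exhausted, and only then invoke the induction hypothesis (which is still legitimate, since $M+N$ has strictly decreased after the first left carry). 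Your write-up drops the invariant entirely, and the ``free-algebra split'' digression does not restore it. As it stands, the proof proves a false statement, so this part needs to be redone with the invariant made explicit.

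Note that the paper itself does not reprove this lemma; it cites it from \cite{lenagan-s:nillie}*{pp.~993--994}, so you cannot lean on the surrounding text to patch the omission.
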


\noindent These sets are useful to estimate the dimension of $A/\mathscr E$:
\begin{proposition}[\cite{lenagan-s:nillie}*{Theorem~11}, \cite{lenagan-s-y:nil}*{Theorem~5.2}]\label{prop:A/Egrowth}
  For every $k\in\N$ we have
  \[\dim A(k)/\mathscr E(k)\le \sum_{j=0}^k\dim V^<(k-j)\dim V^>(j).\]
\end{proposition}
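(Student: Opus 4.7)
My plan is to exhibit a subspace $N(k)\subseteq A(k)$ of the required dimension such that $A(k)=N(k)+\mathscr E(k)$; the bound then follows immediately. The natural candidate is
\[N(k):=\sum_{j=0}^k V^<(k-j)\,V^>(j),\]
whose dimension is visibly at most $\sum_{j=0}^k\dim V^<(k-j)\dim V^>(j)$. The substance of the proof is the identity $A(k)=N(k)+\mathscr E(k)$.

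To establish this identity, I would proceed by strong induction on $k$. Given $r\in A(k)$, apply Lemma~\ref{lem:A=U+V} to write $r=u_L+v_L$ with $u_L\in U^<(k)$ and $v_L\in V^<(k)$. Since $V^>(0)=\K$, we have $v_L\in V^<(k)\,V^>(0)\subseteq N(k)$, so the task reduces to showing $u_L\in N(k)+\mathscr E(k)$. A typical generator of $U^<(k)$ has the form $a_1 u' a_2$ with $u'\in U(2^p)$, $p$ in the binary expansion of $k$ (so $2^p<2^n$ when $2^{n-1}\le k<2^n$), and $a_1,a_2$ of complementary degrees. Decompose $a_2=u_R+v_R$ via $A(\deg a_2)=U^>(\deg a_2)\oplus V^>(\deg a_2)$. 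The summand $a_1u'v_R$ has $a_1u'$ of strictly smaller degree $\deg a_1+2^p<k$, so the inductive hypothesis gives $a_1u'\in N(\deg a_1+2^p)+\mathscr E(\deg a_1+2^p)$; since $\mathscr E$ is a two-sided ideal, right-multiplication by $v_R$ absorbs the error into $\mathscr E(k)$, and the principal part lies in $N(k)$ once one checks that the trailing $V^>$-factors fuse into a $V^>$ of the appropriate degree.

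The remaining summand $a_1u'u_R$ is the critical one; I expect it to belong entirely to $\mathscr E(k)$. Unwinding the definition, one must show that $b_1 a_1u'u_R b_2\in U(2^n)A(2^n)+A(2^n)U(2^n)$ for every $b_1\in A(j_1)$, $b_2\in A(j_2)$ with $j_1+k+j_2=2^{n+1}$. The closure identities $U^>(\ell)A(j)\subseteq U^>(\ell+j)$ and $A(j)U^<(\ell)\subseteq U^<(\ell+j)$ from Lemma~\ref{lem:A=U+V} push the flanking $A$-factors into $U^<$ and $U^>$ of full degree $2^{n+1}$, and iterated application of Condition~\eqref{prop:6} of Theorem~\ref{8props} identifies the resulting element with a member of $U(2^n)A(2^n)+A(2^n)U(2^n)$.

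The main obstacle I anticipate is the combinatorial bookkeeping in this final step: one must handle configurations in which a small $U(2^p)$-factor ends up sitting across the midpoint of the $2^{n+1}=2^n+2^n$ split, and verify that repeated application of Condition~\eqref{prop:6} (together with the closure properties of $U^<,U^>$) nonetheless recovers such straddling products as elements of $U(2^n)A(2^n)+A(2^n)U(2^n)$. The detailed argument is carried out in~\cite{lenagan-s:nillie}*{Theorem~11} and~\cite{lenagan-s-y:nil}*{Theorem~5.2}, whose proofs transfer to the present setting once one verifies that the subspaces $U(2^n),V(2^n)$ produced by Theorem~\ref{8props} satisfy exactly the same closure properties used there.
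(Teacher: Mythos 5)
The paper does not actually prove this proposition: it is quoted verbatim from \cite{lenagan-s:nillie}*{Theorem~11} and \cite{lenagan-s-y:nil}*{Theorem~5.2}, so there is no in-paper argument to compare with. Your overall plan --- exhibit $N(k):=\sum_{j=0}^k V^<(k-j)V^>(j)$ and prove $A(k)=N(k)+\mathscr E(k)$ --- is the natural one and likely matches the cited proofs in spirit. But as written, the inductive step has a concrete gap.

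The problem is the step you label ``the trailing $V^>$-factors fuse into a $V^>$ of the appropriate degree.'' Having written $a_1u'\in N(\ell)+\mathscr E(\ell)$ with $\ell=\deg a_1+2^{p_i}$, you need $N(\ell)\,V^>(\deg a_2)\subseteq N(k)$, i.e.\ for each $j$, $V^<(\ell-j)\,V^>(j)\,V^>(\deg a_2)\subseteq N(k)$. The only plausible target is $V^<(\ell-j)\,V^>(j+\deg a_2)$, which requires $V^>(j)\,V^>(\deg a_2)\subseteq V^>(j+\deg a_2)$. This inclusion is false in general. By definition (1.7)--(1.10), $V^>(m)$ is the product of the $V(2^{p})$'s over the binary digits of $m$ \emph{in decreasing order of $p$}. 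The bits of $\deg a_2=2^{p_{i+1}}+\cdots+2^{p_t}$ are strictly larger than those of $j\le 2^{p_1}+\cdots+2^{p_i}$, so $V^>(j)\,V^>(\deg a_2)$ has its low-degree $V$-factors first and its high-degree $V$-factors last, whereas $V^>(j+\deg a_2)$ has them in the opposite order. Since the $V(2^p)$'s live in a free algebra and do not commute, these are genuinely different subspaces of $A(j+\deg a_2)$, and the argument stalls. (Decomposing $a_1$ on the $U^</V^<$ side instead runs into the mirror problem: $V^<(\deg a_1)V^<(\ell'-j)$ fuses only when the bits of $\ell'-j$ all exceed those of $\deg a_1$, which is not guaranteed since $j$ ranges over all of $\{0,\dots,\ell'\}$.)

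The second summand also needs more care than you give it. The claim $a_1u'u_R\in\mathscr E(k)$ with $a_1\in A(\deg a_1)$ arbitrary, $u'\in U(2^{p_i})$, $u_R\in U^>(\deg a_2)$, requires one to push the factor $U(2^{p_i})$ leftwards across $A(\deg a_1)$; but Lemma~\ref{lem:A=U+V} only provides the one-sided absorption $U^>(k)A(\ell)\subseteq U^>(k+\ell)$, not $A(\ell)U^>(k)\subseteq U^>(\ell+k)$, and Condition~\eqref{prop:6} absorbs $U$ only at matching powers of two. So the reduction to $U(2^n)A(2^n)+A(2^n)U(2^n)$ is not a routine application of the stated closure identities. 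Deferring to the references is legitimate for a cited result, but the specific decomposition you chose is not obviously the one that makes those references' bookkeeping go through; you would need to either reorganize the induction (e.g.\ induct on the number of $V$-factors that violate the ascending-then-descending ``tent'' pattern) or consult the actual proof of \cite{lenagan-s:nillie}*{Theorem~11}.
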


We remark that, although in the proofs above we copied results
from~\cite{lenagan-s:nillie}, we do not need to assume that the base
field $\K$ be countable. Indeed, this assumption was only used
in~\cite{lenagan-s:nillie} to enumerate powers of elements of $A$ in
order to construct a nil algebra; and we do not need to do this here.

\section{The Gelfand-Kirillov dimension of $A/\mathscr E$}\label{ss:GK}
In this~\S, we estimate the Gelfand-Kirillov dimension of the algebra
$A/\mathscr E$ that was constructed in the previous~\S. To lighten
notation, we write $[X]=\dim X$ for the dimension of a
subspace $X\subseteq A$.

We start with a lemma about the dimensions $V^>(k)$ and $V^<(k)$,
continuing on the notation of~\S\ref{ss:UV}.

\begin{lemma}\label{lem:3.1}
  Let $\alpha$ be a natural number, with binary decomposition $\alpha
  = 2^{p_1} + \cdots +2^{p_{t}}$. Suppose $p_i\notin S$ for all
  $i=1,\ldots,t$. Then $[V^>(\alpha)]\le 2\alpha$.
 \end{lemma}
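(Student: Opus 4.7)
The plan is to bound $[V^>(\alpha)]$ directly from its definition as a product of the $V(2^{p_i})$'s, then bound the resulting power of two by $2\alpha$ using the binary expansion.

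First, since each $p_i\notin S$, Condition~(\ref{prop:1}) of Theorem~\ref{8props} gives $[V(2^{p_i})]=2$. By definition~(\ref{def:V>}),
\[V^>(\alpha)=V(2^{p_t})V(2^{p_{t-1}})\cdots V(2^{p_1}),\]
so submultiplicativity of dimension under the product map yields
\[[V^>(\alpha)]\le \prod_{i=1}^{t}[V(2^{p_i})]=2^t.\]

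Second, since the $p_i$ are distinct nonnegative integers with $p_1<\cdots<p_t$, we have $p_i\ge i-1$, hence
\[\alpha=\sum_{i=1}^t 2^{p_i}\ge \sum_{i=1}^t 2^{i-1}=2^t-1.\]
For $t\ge 1$ this gives $2\alpha\ge 2^{t+1}-2\ge 2^t$, so $[V^>(\alpha)]\le 2^t\le 2\alpha$. (If $t=0$, then $\alpha=0$ and $V^>(0)$ is understood to be the empty product, so the statement is vacuous or interpreted with the convention that $\alpha\ge 1$.)

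There is no real obstacle: once one reads off $[V(2^{p_i})]=2$ from Condition~(\ref{prop:1}) and recalls the definition of $V^>$, the rest is an elementary bookkeeping inequality about binary expansions.
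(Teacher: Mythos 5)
Your proof is correct and follows essentially the same route as the paper: both observe $[V(2^{p_i})]=2$ from Condition~(\ref{prop:1}), reduce to bounding $2^t$, and then use the fact that a number with $t$ binary digits is at least $2^{t-1}$ (the paper phrases this as $t\le\log\alpha+1$, you as $\alpha\ge 2^t-1$, which is the same estimate). The only cosmetic difference is that you write $\le$ where the paper writes $=$ for $\dim V^>(\alpha)$; the equality in fact holds because the $V(2^{p_i})$ are spanned by monomials, but your one-sided inequality is all that is needed and is perfectly safe.
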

\begin{proof}
  If $p_i\notin S$, then $[V(2^{p_i})]=2$ by assumption, so
  \[[V^>(\alpha)]=\prod_{i=1}^t[V(2^{p_i})]=2^t\le 2^{\log(\alpha)+1}\le 2\alpha.\qedhere\]
\end{proof}

\begin{lemma}\label{lem:3.2}
  Let $\alpha$ be a natural number, with binary decomposition $\alpha
  = 2^{p_1} + \cdots +2^{p_{t}}$. Suppose that there is $n\in Y$ such
  that $p_i\in\{n-e(n)-1,\dots,n-1\}$ for all $i=1,\dots,t$. Then
  $[V^>(\alpha)]\le 2^{10dn}$.  More precisely,
  $[V^>(\alpha)]=2^{\alpha/2^{n-e(n)-1}}$.
\end{lemma}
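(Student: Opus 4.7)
The plan is to compute $[V^>(\alpha)]$ exactly, using the monomial spanning property of the $V(2^{p_i})$, and then bound the resulting exponent in terms of $n$ using the definition of $e(n)$.

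First I would unfold the definition~(\ref{def:V>}) to write $V^>(\alpha)=V(2^{p_t})\cdots V(2^{p_1})$. Because each $V(2^{p_i})$ is spanned by monomials of degree $2^{p_i}$ (Condition~\eqref{prop:3} of Theorem~\ref{8props}), the product space is spanned by concatenations of these monomials. Since distinct monomials in the free algebra $A$ are linearly independent, this spanning set is a basis, so
\[
[V^>(\alpha)]=\prod_{i=1}^{t}[V(2^{p_i})].
\]

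Next, since each $p_i\in\{n-e(n)-1,\dots,n-1\}$, I can write $p_i=n-e(n)-1+j_i$ with $0\le j_i\le e(n)$, and apply Condition~\eqref{prop:2} of Theorem~\ref{8props}, using the hypothesis $n\in Y$, to get $[V(2^{p_i})]=2^{2^{j_i}}$. Hence
\[
[V^>(\alpha)]=\prod_{i=1}^{t}2^{2^{j_i}}=2^{\sum_{i}2^{j_i}}.
\]
Factoring $2^{p_i}=2^{n-e(n)-1}\cdot 2^{j_i}$ in the binary decomposition of $\alpha$ gives $\sum_i 2^{j_i}=\alpha/2^{n-e(n)-1}$, which yields the second (more precise) conclusion.

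For the first, coarser bound, I would simply note that $\alpha<2^{n}$ (sum of distinct powers of $2$ all less than $2^n$), so
\[
\alpha/2^{n-e(n)-1}<2^{e(n)+1}\le 2\cdot 5dn=10dn,
\]
the last inequality using $e(n)=\lfloor\log(5dn)\rfloor$ from~\eqref{def:e}. This immediately gives $[V^>(\alpha)]\le 2^{10dn}$. There is no real obstacle here: the whole statement is a bookkeeping exercise once one sees that the monomial property~\eqref{prop:3} turns the product bound into equality, and that the exponents $j_i$ recombine cleanly via the geometric factor $2^{n-e(n)-1}$.
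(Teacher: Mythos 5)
Your proof is correct and follows essentially the same route as the paper's: apply Condition~\eqref{prop:2} to get $[V(2^{p_i})]=2^{2^{p_i-(n-e(n)-1)}}$, multiply, identify $\sum_i 2^{p_i-(n-e(n)-1)}=\alpha/2^{n-e(n)-1}$, and bound by $2^{e(n)+1}\le 10dn$ using $\alpha<2^n$. The only difference is that you explicitly justify the multiplicativity $[V^>(\alpha)]=\prod_i[V(2^{p_i})]$ via the monomial-basis property~\eqref{prop:3}, a step the paper leaves implicit (and which it already used silently in Lemma~\ref{lem:3.1}).
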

\begin{proof}
  Recall that we defined $e(n)=\lfloor \log(5dn)\rfloor$,
  see~\eqref{def:e}, and that, by Theorem~\ref{8props}(2), we have
  $[V(2^i)]=2^{2^{i-(n-e(n)-1)}}$ for all
  $i\in\{n-e(n)-1,\dots,n-1\}$. Then
  \begin{align*}
    \log[V^>(\alpha)]&=\log\prod_{i=1}^t[V(2^{p_i})]=\log\prod_{i=1}^t2^{2^{p_i-(n-e(n)-1)}}\\
    &= \sum_{i=1}^t2^{p_i-(n-e(n)-1)}=\frac{\alpha}{2^{n-e(n)-1}}\\
    &\le 2^{e(n)+1}\le 10dn.\qedhere
  \end{align*}
\end{proof}

\begin{proposition}\label{prop:bdV>}
  Let $\alpha$ be a natural number.  Then $[V^>(\alpha)]<2\alpha^{22d}$.
\end{proposition}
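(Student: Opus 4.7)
The plan is to decompose the binary expansion of $\alpha$ according to the block structure of $S$, apply the previous two lemmas to each piece, and then use the sparseness hypothesis~\ref{gosha:3} to control the sum of the relevant indices. Write $\alpha=2^{p_1}+\cdots+2^{p_t}$ with $0\le p_1<\cdots<p_t$. Because the union defining $S$ in~\eqref{def:S} is disjoint, every $p_i\in S$ belongs to a unique block $\{n-e(n)-1,\dots,n-1\}$ indexed by some $n\in Y$. Partition the $p_i$ accordingly and set $\alpha_0=\sum_{p_i\notin S}2^{p_i}$ and, for each $n\in Y$, $\alpha_n=\sum_{p_i\in\{n-e(n)-1,\dots,n-1\}}2^{p_i}$; let $Y'$ collect those $n$ with $\alpha_n\ne0$. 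Since $[V^>(\alpha)]=\prod_i[V(2^{p_i})]$ depends only on the multiset of exponents, it factorizes as
\[
[V^>(\alpha)]=[V^>(\alpha_0)]\cdot\prod_{n\in Y'}[V^>(\alpha_n)]\le 2\alpha\prod_{n\in Y'}2^{10dn}
\]
by Lemma~\ref{lem:3.1} and Lemma~\ref{lem:3.2}.

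Second, I would exploit~\ref{gosha:3} to prove that $Y$ (hence $Y'$) is geometrically sparse. If $m<n$ both lie in $Y$, pick witnesses $k_m,k_n$ with $D(k_\ell)\ne0$ and $\tfrac32 2^\ell\le k_\ell\le\tfrac74 2^\ell$; then $k_m<2^{m+1}\le 2^n<k_n$, so~\ref{gosha:3} applied with $j=k_m$ and $n$ forbids $k_n<\max\{k_m^{1000},12dnk_m\}$. The inequality $k_n\ge k_m^{1000}$ together with the two-sided size bounds on $k_\ell$ yields $n\ge 990\,m$, comfortably. Enumerating $Y'=\{m_1<\cdots<m_r\}$, the iterated geometric spacing gives $\sum_i m_i\le\tfrac{990}{989}m_r<1.01\,m_r$, so $\prod_{n\in Y'}2^{10dn}\le 2^{10.1 d m_r}$.

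Third, I would bound $m_r$ in terms of $\log\alpha$. Since $\alpha_{m_r}\ne 0$ has at least one binary digit at an exponent in $\{m_r-e(m_r)-1,\dots,m_r-1\}$, one has $\alpha\ge\alpha_{m_r}\ge 2^{m_r-e(m_r)-1}\ge 2^{m_r}/(10dm_r)$. Hence $2^{m_r}\le 10dm_r\,\alpha$, which after a short bootstrap of the inequality $m_r\le\log\alpha+\log(10dm_r)$ gives $m_r\le(1+o(1))\log\alpha$ for $\alpha$ large. Combining everything,
\[
[V^>(\alpha)]\le 2\alpha\cdot(10dm_r\,\alpha)^{10.1 d},
\]
and the overhead $(10dm_r)^{10.1d}$, being polylogarithmic in $\alpha$, is absorbed into $\alpha^{O(d)}$ leaving plenty of room to reach $2\alpha^{22d}$.

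The main obstacle is purely quantitative bookkeeping: keeping the constant in $m_r\le C\log\alpha$ close enough to $1$ and verifying that the polylogarithmic overhead fits below $\alpha^{22d-O(1)d}$. Hypotheses~\ref{gosha:1} and $d\ge 50$ are what make the bootstrap succeed — they force any nonempty $Y'$ to start at a sizeable $m_1$ (since $2^n\ge 18dn$ for every $n\in Y$), so the small-$\alpha$ regime either has $Y'=\emptyset$ (in which case Lemma~\ref{lem:3.1} already suffices) or already lies inside the asymptotic window where the calculation above applies.
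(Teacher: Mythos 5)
Your decomposition and use of Lemmas~\ref{lem:3.1} and~\ref{lem:3.2} mirror the paper's proof exactly; the sparsity of $Y$ extracted from~\eqref{gosha:3} is also the same idea (the paper asserts $n'>500n$ for $n<n'\in Y$, you derive $n\ge 990m$ — both come from $k_n\ge k_m^{1000}$). So the overall structure matches.

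The gap is in your third step and the closing paragraph. You pass from the exact value $[V^>(\alpha_{m_r})]=2^{\alpha_{m_r}/2^{m_r-e(m_r)-1}}$ (Lemma~\ref{lem:3.2}) to the worst-case $2^{10dm_r}$, and then try to convert $2^{m_r}\le 10dm_r\alpha$ into $m_r\le(1+o(1))\log\alpha$ by a ``bootstrap.'' This conversion only works once $\alpha$ is large compared with $10dm_r$, and $Y'\neq\emptyset$ does \emph{not} force that. For instance with $d=50$, the constraints~\eqref{gosha:1},~\eqref{gosha:2} allow $m_r=13\in Y$; then $e(13)=\lfloor\log 3250\rfloor=11$, the block is $S_{13}=\{1,\dots,12\}$, and the single choice $\alpha=2$ (one bit at position $1$) already gives $\gamma\neq 0$, $m_r=13$. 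Your final bound $2\alpha\,(10dm_r\alpha)^{10.1d}$ is then $\approx 2^{6900}$, while $2\alpha^{22d}=2^{1101}$, so the ``polylog overhead'' is nowhere near absorbed. The true value is $[V^>(2)]=\dim V(2)=2$, which is fine — the problem is that replacing $2^{\alpha_{m_r}/2^{m_r-e(m_r)-1}}$ by $2^{10dm_r}$ wastes exactly the information ($\alpha_{m_r}$ may occupy only the low end of the block) that makes the statement true for small $\alpha$. Your closing sentence claiming that $Y'\neq\emptyset$ ``already lies inside the asymptotic window'' is the unproved step; it is false as stated. The paper instead closes this step by asserting $m-e(m)-1>\tfrac m2$ and deducing $\alpha\ge 2^{m/2}$, which lets it compare $10dm$ to $\log\alpha$ without any asymptotic hedging; you would need to supply a concrete lower bound of that type (or avoid the coarse $2^{10dn}$ bound altogether and keep $[V^>(\alpha_{m_r})]$ in terms of $\alpha_{m_r}$) rather than appeal to an $o(1)$ estimate.
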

\begin{proof}
  Write $\alpha = 2^{p_1} + \cdots + 2^{p_{t}}$ in binary. Write again
  $S_n=\{n-e(n)-1,\dots,n-1\}$. For all $n\in\N$, set
  $\alpha_n=\sum_{p_i\in S_n}2^{p_i}$. Set $\gamma=\sum_n\alpha_n$ and
  $\delta=\sum_{p_i\notin S}2^{p_i}$, so that
  $\alpha=\gamma+\delta$. By definition of the sets $V^>(m)$, we have
  $[V^>(\alpha)]=[V^>(\gamma)][V^>(\delta)]$.  By Lemma~\ref{lem:3.2}
  we have $[V^>(\alpha_n)] \le 2^{10dn}$ for all $n$.

  Note now that, by~\eqref{gosha:3}, if
  $n<n'\in Y$ then $500n<n'$. Let $m\in\N$ be maximal such that $\alpha_m\neq0$. We deduce
  \[[V^>(\gamma)]= \prod_{n\le m, n\in Y}[V^>(\alpha_n)]<
  \prod_{i\in\N}2^{10md/500^i}\le 2^{10md\,500/499}.
  \]
  Moreover, from the binary form of $\alpha$, we get $\alpha \geq
  2^{m-e(m)-1}$. Recall that $e(m)=\lfloor \log(5dm)\rfloor$, hence
  by~\eqref{gosha:1} we have $m-e(m)-1>\frac m2$, so
  $[V^>(\gamma)]\le \alpha^{21d}$.

  Finally, by Lemma~\ref{lem:3.1}, we have $[V^>(\delta)]\le
  2\alpha$. Putting everything together, we get
  $[V^>(\alpha)]<2\alpha^{22d}$.
\end{proof}

\begin{lemma}\label{lem:3.4}
  Let $\alpha,\beta$ be natural numbers such that $\alpha+\beta \le
  2^{n-1}+2^{n-2}$ for some $n\in Y$. Then
  \[[V^<(\alpha)][V^>(\beta)]\le \frac1{2^{(n+1)(d+2)+2}}
  [V(2^{n-1})]^2.\]
\end{lemma}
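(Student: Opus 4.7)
The plan is to decompose the binary expansions $\alpha = \sum 2^{p_i}$, $\beta = \sum 2^{q_j}$ according to which interval $S_m=\{m-e(m)-1,\dots,m-1\}$ (for $m\in Y$) each power falls into, and bound the contribution of each piece to $[V^<(\alpha)][V^>(\beta)]=\prod_i[V(2^{p_i})]\cdot\prod_j[V(2^{q_j})]$. By property~\eqref{prop:2} and the choice of $e(n)$ in~\eqref{def:e}, one has $[V(2^{n-1})]^2 = 2^{2\cdot 2^{e(n)}}$, so it suffices to bound $\log([V^<(\alpha)][V^>(\beta)])$ above by $2\cdot 2^{e(n)} - (n+1)(d+2) - 2$.

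Main term. For powers lying in $S_n$, set $\alpha_n=\sum_{p_i\in S_n}2^{p_i}$ and define $\beta_n$ analogously. Arguing as in Lemma~\ref{lem:3.2}, their total contribution to the logarithm is $(\alpha_n+\beta_n)/2^{n-e(n)-1}$. Since $\alpha_n+\beta_n\le\alpha+\beta\le 3\cdot 2^{n-2}$, this quantity is at most $(3/2)\cdot 2^{e(n)}$, which already saves $(1/2)\cdot 2^{e(n)}$ in the logarithm against $[V(2^{n-1})]^2$.

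Lower-order terms. Powers in $S_m$ for $m<n$, $m\in Y$, each contribute at most $2\cdot 2^m/2^{m-e(m)-1}=2^{e(m)+2}\le 20dm$ in log. The sparseness of $Y$ derived from~\eqref{gosha:3} (consecutive elements of $Y$ differ by a factor $\ge 500$, exactly as in the proof of Proposition~\ref{prop:bdV>}) gives $\sum_{m<n,\,m\in Y} m \le n/499$, so this class contributes $\le 20dn/499$. Powers not in any $S_m$ contribute a factor of $2$ each by property~\eqref{prop:1}, and there are at most $2n$ of them since $\alpha,\beta<2^n$, giving $\le 2n$ in log. One must also verify that no $p_i$ or $q_j$ can lie in $S_m$ for $m>n$, $m\in Y$; but such $m$ would satisfy $m>500n$, and then~\eqref{def:e} yields $m-e(m)-1>n-1$, so $S_m\cap\{0,\dots,n-1\}=\emptyset$.

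Putting everything together, $\log([V^<(\alpha)][V^>(\beta)])\le (3/2)\cdot 2^{e(n)} + 20dn/499 + 2n$, so the desired inequality reduces to $(1/2)\cdot 2^{e(n)}\ge (n+1)(d+2)+2+20dn/499+2n$. From $e(n)=\lfloor\log(5dn)\rfloor$ we have $2^{e(n)}\ge 5dn/2$, so the left side is at least $5dn/4$, which dominates the right-hand linear-in-$n$ expression once $d\ge 50$ and $n$ exceeds the minimum value forced on elements of $Y$ by~\eqref{gosha:1} (which ensures $2^n\ge 12dn$, so $n$ is not too small). The main obstacle is not any single step but the tight bookkeeping of the constants: the saving of $(1/2)\cdot 2^{e(n)}$ from the main term is only just large enough to absorb the overhead $(n+1)(d+2)+2$ together with the lower-order contributions, which is precisely why the hypothesis $d\ge 50$ is needed.
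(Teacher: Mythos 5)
Your proof is correct and follows essentially the same route as the paper's: the same decomposition of the binary digits into those in $S_n$, those in $S_m$ for $m<n$ in $Y$, and those outside $S$, the same appeals to Lemmas~\ref{lem:3.1} and~\ref{lem:3.2}, the same bound $(3/2)\cdot 2^{e(n)}$ on the dominant term, and the same final arithmetic using $2^{e(n)}>5dn/2$ and $d\ge50$, $n\ge9$. The only differences are cosmetic — you keep everything in log space from the start and track the subleading constant as $20dn/499$ rather than the paper's $dn/24$, which is the same order of magnitude — and you observe (correctly, though a bit more elaborately than by simply citing disjointness of the $S_m$) that no bit position below $n$ can lie in $S_m$ for $m>n$.
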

\begin{proof}
  Write $\alpha = 2^{p_1} + \cdots +2^{p_t}$ in binary. Write again
  $S_m=\{m-e(m)-1,\dots,m-1\}$ and $\alpha_m=\sum_{p_i\in
    S_m}2^{p_i}$. Set now $\gamma=\sum_{m<n}\alpha_m$ and
  $\delta=\sum_{p_i\notin S}2^{p_i}$; we get
  $\alpha=\gamma+\delta+\alpha_n$, and by definition of the sets
  $V^>(n)$ we get $[V^>(\alpha)]=[V^>(\gamma)][V^>(\delta)][V^>(\alpha_n)]$.

  \noindent As in Proposition~\ref{prop:bdV>},
  \[[V^>(\gamma)]= \prod_{m<n/500, m\in Y}[V^>(\alpha_m)] < \prod_{i\in\N}2^{10dn/500^{i+1}}\le 2^{10dn/499}\le 2^{dn/48} .\]
  By Lemma~\ref{lem:3.1}, we get
  \[[V^>(\delta)]\le 2\delta\le 2\alpha.\]
  By Lemma~\ref{lem:3.2}, we get
  \[[V^>(\alpha_n)]=2^{\alpha_n / 2^{n-e(n)-1}}\le 2^{\alpha/2^{n-e(n)-1}}.\]
  Therefore,
  \[[V^>(\alpha)]\le 2\alpha 2^{dn/48}2^{\alpha / 2^{n-e(n)-1}}.\]
  By the definition of sets $V^<$ and $V^>$, we get
  $[V^<(\alpha)]=[V^>(\alpha)]$, so
  \[[V^<(\alpha)][V^>(\beta)]\le 4(\alpha\beta) 2^{dn/24}2^{\frac{\alpha+\beta}{2^{n-e(n)-1}}}.\]
  Since $\alpha+\beta \le 2^{n-1}+2^{n-2}$ so $4\alpha\beta\le 2^{2n}$, we get
  \begin{align*}
    \log([V^<(\alpha)][V^>(\beta)]) &\le 2n+dn/24+\frac{2^{n-1}+2^{n-2}}{2^{n-e(n)-1}}\\
    &= 2n+dn/24+2^{e(n)}+2^{e(n)-1}.
    \intertext{By Theorem~\ref{8props}(2) we have $[V(2^{n-1})]=2^{2^{e(n)}}$, so}
    \log([V^>(\alpha)][V^<(\beta)]) &\le 2n+dn/24+\log([V(2^{n-1})]^2)-2^{e(n)-1}\\
    &\le 2n+dn/24+\log([V(2^{n-1})]^2)-5dn/4\\
    &\le \log([V(2^{n-1})]^2)-\big((n+1)(d+2)+2\big)
  \end{align*}
  as required. The last inequality holds thanks to
  our assumption $d\ge 50$. Indeed, since $n\in Y$ then by ~\eqref{gosha:1} we have $2^{n+1}\geq 18d$, and so $n\geq 9$, hence $(d/5-3)(n-5)\geq 20$, which implies the last inequality.
\end{proof}

\begin{lemma}\label{lem:3.5}
  Let $F(2^n),U(2^n),V(2^n),S$ be as in Theorem~\ref{8props}.  Let
  $\mathscr E$ be defined as in~\eqref{def:E}.  Then the algebra
  $A/\mathscr E$ has Gelfand-Kirillov dimension at most $45d$.
\end{lemma}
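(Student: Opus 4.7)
The plan is to estimate the growth function $v(n)=\sum_{k=0}^n\dim A(k)/\mathscr E(k)$ of $A/\mathscr E$ and show it is bounded above by a polynomial of degree at most $45d$. The starting point is Proposition~\ref{prop:A/Egrowth}, which already gives
\[\dim A(k)/\mathscr E(k)\le\sum_{j=0}^k[V^<(k-j)]\,[V^>(j)]\]
for every $k\in\N$, thereby reducing the problem to bounding each summand $[V^<(k-j)]\,[V^>(j)]$ individually.

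The core input is Proposition~\ref{prop:bdV>}, which provides the uniform estimate $[V^>(\alpha)]<2\alpha^{22d}$ for every $\alpha\ge 1$. Combined with the identity $[V^<(\alpha)]=[V^>(\alpha)]$ (used inside the proof of Lemma~\ref{lem:3.4} and immediate from the symmetric definitions~\eqref{def:V<} and~\eqref{def:V>}), this bounds each summand by $4k^{44d}$ for $k\ge 1$, with the boundary terms $j\in\{0,k\}$ bounded by $2k^{22d}$. Summing over $j\in\{0,\dots,k\}$ gives $\dim A(k)/\mathscr E(k)\le 4(k+1)k^{44d}$, and a further sum over $k\in\{0,\dots,n\}$ yields $v(n)\le C\,n^{44d+2}$ for some absolute constant $C$. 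By the definition of Gelfand-Kirillov dimension recalled in the introduction, this gives $\GKdim(A/\mathscr E)\le 44d+2$; since $d\ge 50$ implies in particular $d\ge 2$, we have $44d+2\le 45d$, as required.

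The main obstacle in this final step has already been overcome upstream: it is the uniform polynomial bound in Proposition~\ref{prop:bdV>}, whose proof uses the separation hypothesis~\eqref{gosha:3} to control how many non-vanishing blocks $\alpha_m$ can contribute and the inductive construction of the spaces $V(2^m)$ from Theorem~\ref{8props}. What remains here is then essentially a routine double summation that converts the pointwise product estimate on $[V^<]\,[V^>]$ into a polynomial growth bound on $A/\mathscr E$, with a slack factor of $n^2$ absorbed by the convenient choice of the exponent $45d$ rather than the sharper $44d+2$.
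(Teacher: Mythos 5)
Your proof is correct and follows the same route as the paper: combine Proposition~\ref{prop:A/Egrowth} with the polynomial bound of Proposition~\ref{prop:bdV>} and sum. You are in fact slightly more careful than the paper's terse version, tracking the extra factor of $n$ from the sum over $k$ explicitly and absorbing the resulting $44d+2$ into $45d$ via $d\ge2$, which is exactly the slack that makes the constant $45$ work.
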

\begin{proof}
  By Proposition~\ref{prop:A/Egrowth}, we have $\dim(A(k)/\mathscr
  E(k))\le \sum_{j=0}^k[V^<(k-j)][V^>(j)].$ By Proposition~\ref{prop:bdV>}
  we have $\dim(A(k)/\mathscr E(k))\le \sum_{j=0}^k4k^{44d}\le
  8k^{45d}$. Therefore, $\GKdim(A/\mathscr E)\le 45d$.
\end{proof}

\section{Constructing $F(2^n)$ from $U(2^m),V(2^m)$ for $m<n$}\label{ss:F}
In this~\S, we construct the sets $F(2^n)\subseteq A(2^n)$ that let us
apply Theorem~\ref{8props}. We proceed by steps:
\begin{lemma}\label{lem:F'}
  Let the notation be as in Theorem~\ref{8props} and
  Theorem~\ref{thm:gosha}.  Consider all $D(k)\subseteq A(k)$ with
  $2^n+2^{n-1}\le k\le 2^n+2^{n-1}+2^{n-2}$.  Suppose we defined sets
  $U(2^m)\subseteq A(2^m)$ for all $m<n$, and suppose $n\in Y$.

  Then there exists a linear $\K$-space $F'(2^n)\subseteq A(2^n)$ with the
  following properties:
  \begin{itemize}
  \item $0<\dim F'(2^n)\le \frac12\dim V(2^{n-1})^2$;
  \item for all $i,j\ge0$ with $i+j=k-2^n$ we have $D(k)\subseteq
    A(i)F'(2^n)A(j)+U^<(i)A(k-i)+A(k-j)U^>(j)$, with the sets
    $U^<(i),U^>(i)$ defined in~(\ref{def:U<}--\ref{def:V>}).
  \end{itemize}
\end{lemma}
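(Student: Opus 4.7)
\textbf{Proof proposal for Lemma~\ref{lem:F'}.}

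The plan is to construct $F'(2^n)$ directly as the span of the ``middle pieces'' that appear when elements of $D(k)$ are decomposed according to the direct-sum splittings of Lemma~\ref{lem:A=U+V}. Fix $k$ in the range $2^n+2^{n-1}\le k\le 2^n+2^{n-1}+2^{n-2}$ and fix $i,j\ge 0$ with $i+j=k-2^n$. Using $A(i)=U^<(i)\oplus V^<(i)$ and $A(j)=U^>(j)\oplus V^>(j)$, together with the absorption properties $A(i)U^<(i)\subseteq U^<(i)$ is not quite what we need; rather, the point is that $U^<(i)A(2^n+j)\subseteq U^<(i)A(k-i)$ and $V^<(i)A(2^n)U^>(j)\subseteq A(k-j)U^>(j)$, so the only summand of $A(k)=A(i)A(2^n)A(j)$ that is not automatically contained in the ``error space'' $U^<(i)A(k-i)+A(k-j)U^>(j)$ is $V^<(i)A(2^n)V^>(j)$.

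Now fix bases $\mathcal B_i^<$ and $\mathcal B_j^>$ of $V^<(i)$ and $V^>(j)$, and a basis of $D(k)$. For each basis element $r\in D(k)$, one writes the corresponding projection onto $V^<(i)A(2^n)V^>(j)$ uniquely as
\[ \sum_{a\in\mathcal B_i^<,\,c\in\mathcal B_j^>} a\,f^{r,i,j}_{a,c}\,c \]
for certain polynomials $f^{r,i,j}_{a,c}\in A(2^n)$. I define $F'(2^n)$ to be the linear span of all the $f^{r,i,j}_{a,c}$, over all admissible $k,r,i,j,a,c$, together with one extra fixed nonzero monomial (for instance $x^{2^n}$) to guarantee $\dim F'(2^n)>0$ in the degenerate case where every projection vanishes. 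By construction, for each $(k,i,j)$ we have $D(k)\subseteq V^<(i)F'(2^n)V^>(j)+U^<(i)A(k-i)+A(k-j)U^>(j)$, which is stronger than the required inclusion.

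It remains to check the dimension bound. The number of choices of $k$ is at most $2^{n-2}+1$; for each $k$, the number of pairs $(i,j)$ is $k-2^n+1\le 2^{n-1}+2^{n-2}+1$; and $\dim D(k)\le k^d\le 2^{(n+1)d}$ by the hypotheses of Theorem~\ref{thm:gosha}. For each quadruple $(k,r,i,j)$ the number of $(a,c)$ contributions is $\dim V^<(i)\dim V^>(j)$, and since $i+j=k-2^n\le 2^{n-1}+2^{n-2}$ lies in the range covered by Lemma~\ref{lem:3.4}, this is at most $\dim V(2^{n-1})^2/2^{(n+1)(d+2)+2}$. Multiplying these factors and adding the single extra monomial gives
\[ \dim F'(2^n)\le 1+\frac{2^{n-2+n-1+(n+1)d+O(1)}}{2^{(n+1)d+2(n+1)+2}}\dim V(2^{n-1})^2 \le \tfrac12\dim V(2^{n-1})^2, \]
where the last step uses that $\dim V(2^{n-1})^2=2^{2^{e(n)+1}}$ is much larger than $1$ for $n\in Y$ (by condition~\eqref{gosha:1}, so that $e(n)\ge 1$).

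The main obstacle I anticipate is making the arithmetic in the last display come out correctly: the telescoping of $2^{(n+1)d}$ from $\dim D(k)$ against the $2^{-(n+1)(d+2)}$ from Lemma~\ref{lem:3.4} is exactly what forces the quantitative hypothesis ``$\dim D(k)\le k^d$'' to be compatible with the demand that $F'(2^n)$ take up at most half of $\dim V(2^{n-1})^2$. The choice of the constant ``$(n+1)(d+2)+2$'' in Lemma~\ref{lem:3.4} was engineered precisely for this step, so the bookkeeping should close cleanly once the counting above is done carefully.
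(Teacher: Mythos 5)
Your proposal is correct and follows essentially the same route as the paper's proof: decompose $A(i)A(2^n)A(j)$ via Lemma~\ref{lem:A=U+V} into $V^<(i)A(2^n)V^>(j)\oplus\bigl(U^<(i)A(k-i)+A(k-j)U^>(j)\bigr)$, collect the ``middle'' coefficients $z_{c,d,f}$ of the projections of a basis of $D(k)$ onto $V^<(i)A(2^n)V^>(j)$, define $F'(2^n)$ as their span, and bound its dimension by counting $(k,i,j)$, using $\dim D(k)\le k^d$, and invoking Lemma~\ref{lem:3.4}. The one genuine difference is that you explicitly add a fixed nonzero monomial to guarantee $\dim F'(2^n)>0$, a case the paper's proof does not address; this is a harmless and welcome cleanup, since the positivity claim is part of the statement but the paper's $F'(2^n)$ could a priori vanish if every $D(k)$ happened to project trivially into $V^<(i)A(2^n)V^>(j)$.
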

\begin{proof} By Lemma~\ref{lem:A=U+V}, we have $U^{<}(i)\oplus
  V^{<}(i)=A(i)$ and $U^{>}(j)\oplus V^{>}(j)=A(j)$. Therefore,
  $A(i)A(2^n)A(j)= (U^{<}(i)\oplus V^{<}(i))A(2^n)(U^{>}(j)\oplus
  V^{>}(j))$.  Consequently, $A(i+2^n+j)=V^{<}(i)A(2^n)V^{>}(j)\oplus
  (U^<(i)A(k-i)+A(k-j)U^>(j))$.

  Consider $f\in D(k)$.  We can write $f$ in the form $f=f'+f''$,
  with
  \[f'=\sum_{c\in V^<(i),d\in V^>(j)}cz_{c,d,f}d\in V^<(i)A(2^n)V^>(j)\quad\text{for some } z_{c,d,f}\in A(2^n)\]
  and $f''\in U^<(i)A(k-i)+A(k-j)U^>(j)$.

  Still for that given $f$, we restrict the $c,d$ above to belong to
  bases of $V^<(i)$ and $V^>(j)$ respectively, and let
  $T(i,j,f)\subseteq A(2^n)$ be the subspace spanned by all the
  $z_{c,d,f}$ above. We then have $\dim T(i,j,f)\le\dim V^<(i)\dim
  V^>(j)$. Observe also $f\in
  A(i)T(i,j,f)A(j)+U^<(i)A(k-i)+A(k-j)U^>(j)$, because
  $V^<(i)\subseteq A(i)$ and $V^>(j)\subseteq A(j)$. Define
  \[F'(2^n)=\sum_{k=2^n+2^{n-1}}^{2^n+2^{n-1}+2^{n-2}}\sum_{f\in D(k)}\sum_{i+j=k-2^n}T(i,j,f).\]
  We have $2^{n-1}\le i+j \le 2^{n-1}+2^{n-2}$, so Lemma~\ref{lem:3.4} gives
  \begin{align*}
    \dim F'(2^n) &\le 2^{2n-2}2^{(n+1)d}\sup_{2^{n-1}\le i+j\le 2^{n-1}+2^{n-2}} \dim
    V^<(i)\dim V^>(j)\\
    &\le\tfrac12\dim V(2^{n-1})^2.\qedhere
  \end{align*}
\end{proof}

\begin{lemma}\label{lem:GKdown1}
  Let $R$ be a commutative finitely generated graded algebra of
  Gelfand-Kirillov dimension $t$.  Let $\mathscr I$ be a principal
  homogeneous ideal in $R$, that is, an ideal generated by one
  homogeneous element. Then $R/\mathscr I$ has Gelfand-Kirillov
  dimension at least $t-1$.
\end{lemma}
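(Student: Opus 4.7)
The plan is to reduce the statement to Krull's Hauptidealsatz via the classical identification of Gelfand-Kirillov dimension with Krull dimension for finitely generated commutative $\K$-algebras. Being finitely generated and commutative over a field, $R$ is Noetherian by the Hilbert basis theorem; and by a standard theorem (see, e.g., \cite{krause-l:gkdim}*{Theorem~4.5}) its Gelfand-Kirillov dimension equals its Krull dimension. So the task reduces to showing that if $f$ is a homogeneous element of $R$ then the Krull dimension of $R/(f)$ is at least $t-1$.

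Next, I would invoke Krull's principal ideal theorem: in a commutative Noetherian ring, every minimal prime above a proper principal ideal has height at most~$1$. The standard consequence is that $\dim R/(f) \ge \dim R - 1$ whenever $(f)$ is proper. Concretely, one can see this by taking a maximal chain of primes $\mathfrak p_0 \subsetneq \cdots \subsetneq \mathfrak p_t$ in $R$: if $f \in \mathfrak p_0$ then the entire chain descends to $R/\mathscr I$; otherwise one passes to the integral domain $R/\mathfrak p_0$, applies the Hauptidealsatz to the image of $f$ to find a height-one prime $\overline{\mathfrak q}$ containing $f$, and lifts it to a prime $\mathfrak q \supseteq \mathfrak p_0$ of height one containing $f$, which by going-up for Noetherian rings fits into a chain of length $t-1$ in $R/\mathscr I$.

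The only subtlety is the edge case where $\mathscr I = R$, which in the graded setting occurs only when $f$ has degree $0$ and is nonzero, in which case $R/\mathscr I = 0$; this situation does not occur in the lemma's intended application because the homogeneous generators arising from the construction of $F'(2^n)$ all have positive degree. The main obstacle is really just bookkeeping: verifying that the graded hypothesis is compatible with the (ungraded) commutative algebra machinery — which it is, since homogeneous ideals in Noetherian graded rings admit primary decompositions whose primes are homogeneous, so heights and dimensions computed in the graded category agree with those in the ungraded one.
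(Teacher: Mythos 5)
Your approach is genuinely different from the paper's. The paper stays inside the graded, dimension-counting framework: writing $Q=R/\mathscr I$ with $\mathscr I=cR$, it observes $\dim Q({\le}\,n)\ge\dim R({\le}\,n)-\dim R({\le}\,n-\deg c)$, telescopes to get $\dim R({\le}\,k\deg c)\le\sum_{i=1}^{k}\dim Q({\le}\,i\deg c)$, and derives a contradiction by bounding the right-hand side by a polynomial of degree $q+1<t$ under the assumption $\GKdim Q<t-1$. That argument is short and self-contained, with no appeal to Noetherianity, primary decomposition, or the Krull--GK comparison theorem. Your route through $\GKdim=\text{Krull dimension}$ and the Hauptidealsatz is a standard alternative; it trades elementarity for heavier citations, but it does reach the same conclusion once the details are tightened.

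One step as you wrote it, though, is wrong: having found a prime $\mathfrak q\supseteq\mathfrak p_0$ of height one over $\mathfrak p_0$ and containing $f$, you assert that ``by going-up for Noetherian rings'' $\mathfrak q$ fits at the bottom of a chain of length $t-1$ in $R/\mathscr I$. Going-up is a theorem about integral ring extensions and has no bearing here. What you actually need is $\dim R/\mathfrak q\ge t-1$, and that follows from the dimension formula for affine domains: $R/\mathfrak p_0$ is a finitely generated domain over $\K$, hence catenary and equidimensional, so every prime satisfies $\operatorname{ht}+\operatorname{coht}=\dim$. (Equivalently, and more in keeping with the graded setting, localise at the irrelevant maximal ideal $R_+$ and invoke the local form of Krull's theorem, $\dim R_{\mathfrak m}/(f)\ge\dim R_{\mathfrak m}-1$.) A second, smaller, imprecision: you flag that $(f)=R$ must be excluded, but what the Hauptidealsatz step really needs is that the \emph{image} of $f$ in $R/\mathfrak p_0$ is a non-unit. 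That is where homogeneity actually earns its keep: $\mathfrak p_0$, being a minimal prime of a graded ring, is itself homogeneous, so $R/\mathfrak p_0$ is graded and the positive-degree image $\bar f$ cannot be a unit. Without this the inequality genuinely fails for affine rings — e.g.\ $R=\K[x,y,z]/(xz,yz)$ has $\dim R=2$, yet $\dim R/(z-1)=0$, precisely because $z-1$ becomes a unit modulo the minimal prime $(z)$.
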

\begin{proof}
  We write $\mathscr I=cR$ for some homogeneous $c\in R$, and set
  $Q=R/\mathscr I$. We write $R({\le} n)$ for the subspace of $R$
  consisting of elements of degree $\le n$, and define $\mathscr I({\le}
  n)$ and $Q({\le} n)$ similarly. Then, as $R$ and $Q$ graded, we have
  $Q({\le} n)=R({\le} n)/\mathscr I({\le} n)=R({\le} n)/cR({\le} n-\deg c)$;
  from $\dim cR({\le} n-\deg c)\le\dim R({\le} n-\deg c)$ we get $\dim
  Q({\le} n)\ge\dim R({\le} n)-\dim R({\le} n-\deg c)$.

  Suppose for contradiction $\GKdim Q<t-1$, so $\GKdim Q=t-1-\epsilon$
  for some $\epsilon>0$. Consider $q\in(t-1-\epsilon,t-1)$. Then, by
  the definition of Gelfand-Kirillov dimension, we have $\dim Q({\le}
  n)<n^q$ for almost all $n$, so there is $C\in\mathbb R$ such that
  $\dim Q({\le} n)<Cn^q$ for all $n\in\N$. Observe now, for all
  $k\in\N$, that
  \begin{align*}
    \dim R({\le} k\deg c) &= \sum_{i=1}^k\dim R({\le} i\deg c)-\dim R({\le}(i-1)\deg c)\\
    &\le \sum_{i=1}^k\dim Q({\le} i\deg c)<\sum_{i=1}^k C(i\deg c)^q<Ck(k\deg c)^q,
  \end{align*}
  so $\GKdim R\le q+1$, a contradiction with $q<t-1$.
\end{proof}

\begin{lemma}\label{lem:s1}
  Let $\K$ be an algebraically closed field, let $n$ be a natural
  number, and let $T\subseteq A(2^n)$ and $Q\subseteq A(2^{n+1})$ be
  $\K$-linear spaces such that $\dim T+4\dim Q\leq \dim A(2^n)-2$.

  Then there exists a $\K$-linear space $F\subseteq A(2^n)$ of
  dimension at most $\dim A(2^n)-2$ such that $T\subseteq F$ and
  $Q\subseteq FA(2^n)+A(2^n)F$.
\end{lemma}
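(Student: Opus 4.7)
The plan is to reduce the problem to Hilbert's Nullstellensatz, exploiting that $\K$ is algebraically closed. Set $N=\dim A(2^n)$ and let $\Phi\subseteq A(2^n)^*$ be the $(N-\dim T)$-dimensional space of linear functionals on $A(2^n)$ that vanish on $T$. A codimension-$2$ subspace $F$ of $A(2^n)$ containing $T$ is precisely $\ker\phi_1\cap\ker\phi_2$ for a pair of linearly independent $\phi_1,\phi_2\in\Phi$; choose any complement $V\subseteq A(2^n)$ of such an $F$, with basis $v_1,v_2$ dual to $\phi_1|_V,\phi_2|_V$. Because $A$ is free, multiplication identifies $A(2^n)\otimes A(2^n)$ with $A(2^{n+1})$, and under this identification $FA(2^n)+A(2^n)F$ corresponds to $F\otimes A(2^n)+A(2^n)\otimes F$. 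Hence the quotient $A(2^{n+1})/(FA(2^n)+A(2^n)F)$ is identified with $V\otimes V$ via $\pi_V\otimes\pi_V$, where $\pi_V$ is the projection along $F$. Consequently the inclusion $Q\subseteq FA(2^n)+A(2^n)F$ is equivalent to the $4\dim Q$ scalar equations $(\phi_a\otimes\phi_b)(q)=0$ for $a,b\in\{1,2\}$ and $q$ running over a basis of $Q$.

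These equations are bilinear in the pair $(\phi_1,\phi_2)$, so I solve them in two stages using the projective dimension theorem: $r$ hypersurfaces in $\mathbb{P}^m$ over $\K$ always meet in a variety of projective dimension at least $m-r$, provided $r\le m$. First, find a nonzero $\phi_1\in\Phi$ satisfying the $\dim Q$ homogeneous quadratic equations $(\phi_1\otimes\phi_1)(q)=0$; this is solvable since $\dim Q\le(N-\dim T-2)/4$ is far below the ambient projective dimension $N-\dim T-1$ of $\mathbb{P}(\Phi)$. Second, fix this $\phi_1$ and search for $\phi_2\in\Phi$ satisfying the remaining $3\dim Q$ equations $(\phi_a\otimes\phi_b)(q)=0$ for $(a,b)\in\{(1,2),(2,1),(2,2)\}$; two of these families are linear in $\phi_2$ and one is quadratic. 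The hypothesis $\dim T+4\dim Q\le N-2$ yields $3\dim Q\le N-\dim T-2$, so the solution locus in $\mathbb{P}(\Phi)$ has projective dimension at least $1$, and therefore contains a point distinct from $[\phi_1]$. Taking such a $\phi_2$ yields a pair of linearly independent functionals, and $F=\ker\phi_1\cap\ker\phi_2$ has codimension exactly $2$, contains $T$, and satisfies $Q\subseteq FA(2^n)+A(2^n)F$.

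The main structural step is the reduction itself: recognizing that the geometric condition on $F$ translates into a moderate number of bilinear equations on $\Phi$, whose solvability is governed by a clean dimension count. Once that is in place, the projective dimension theorem essentially resolves the problem. The precise form $\dim T+4\dim Q\le\dim A(2^n)-2$ of the hypothesis is tailored so that the second-stage equations still leave a positive-dimensional solution locus, which is what makes it possible to choose $\phi_2$ linearly independent from $\phi_1$; this is the only non-routine quantitative point in the argument.
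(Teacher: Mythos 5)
Your proof is correct, and it takes a genuinely different route from the paper's. You reformulate the problem directly in terms of a pair of linear functionals $\phi_1,\phi_2\in\Phi\subseteq A(2^n)^*$ vanishing on $T$, recognize that $Q\subseteq FA(2^n)+A(2^n)F$ is exactly the system of $4\dim Q$ bilinear equations $(\phi_a\otimes\phi_b)(q)=0$, and then solve in two projective stages via the dimension theorem for intersections of hypersurfaces in $\mathbb{P}(\Phi)$. The paper instead sets up coordinates on the affine space $\K^s\times\K^s$ via a polynomial ring $R=\K[y_1,\dots,y_s,z_1,\dots,z_s]$, packages the same $4\dim Q$ constraints into a subspace $E\subseteq R$, and argues by contradiction: if no point of the variety $V(RE)$ has the coordinate pairs linearly independent, then all the $2\times2$ minors $y_tz_u-z_ty_u$ lie in $\sqrt{RE}$ by the Nullstellensatz, which forces $\GKdim(R/RE)\le s+1$; but an auxiliary commutative-algebra lemma (Lemma~\ref{lem:GKdown1}, that dividing by a principal homogeneous ideal drops GK dimension by at most one) gives $\GKdim(R/RE)\ge 2s-\dim E\ge s+2$, a contradiction. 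Both arguments are dimension counts over an algebraically closed field and both ultimately rest on Krull-type principal-ideal bounds, but yours is constructive and self-contained, avoiding the detour through Gelfand--Kirillov dimension and the auxiliary Lemma~\ref{lem:GKdown1}; the paper's version, on the other hand, keeps the whole computation in one affine variety and is structurally closer to the surrounding machinery (where $\GKdim$ is already in play). One detail worth stating explicitly in your write-up: in the second stage the point $[\phi_1]$ does lie in the stage-two solution locus (the three families of equations collapse to the stage-one quadrics when $\phi_2=\phi_1$), so the positive-dimensionality of that locus genuinely produces a distinct $[\phi_2]$; also, any of the $3\dim Q$ forms that happen to vanish identically should simply be dropped so the intersection-of-hypersurfaces bound applies.
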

\begin{proof}
  Choose a $\K$-linear complement $C\subseteq A(2^n)$ to $T$; we have
  \begin{equation}\label{eq:Csum}
    C\oplus T=A(2^n).
  \end{equation}
  Choose also a basis $\{c_1,\dots,c_s\}$ of $C$ with $s=\dim
  A(2^n)-\dim T$.

  Let $R=\K[y_1,\dots,y_s,z_1,\dots,z_s]$ be the ring of polynomials
  in $2s$ indeterminates, and let $Y,Z$ be two non-commuting
  indeterminates over $R$. Define a $\K$-linear map $\Phi\colon C\to
  RY+RZ$ by
  \[\Phi(c_t)=y_tY+z_tZ\qquad\text{ for }t=1,\dots,s.\]
  Using~\eqref{eq:Csum}, extend $\Phi$ to a $\K$-linear map $A(2^n)\to
  RY+RZ$ satisfying $\ker(\Phi)=T$.

  Consider now $f\in A(2^{n+1})$.
  Then $f\in CC+\ker(\Phi)A(2^n)+A(2^n)\ker(\Phi)$.  It
  follows that there are $\alpha^{t,u}_{f}\in\K$ such that
  \[f\equiv\sum_{1\le t,u\le s}\alpha^{t,u}_{f}c_tc_u\mod{\ker(\Phi)A(2^n)+A(2^n)\ker(\Phi)}.\]

  \noindent Define now a $\K$-linear map $\Psi\colon A(2^{n+1})\to
  R\langle Y,Z\rangle$ by
  \[\Psi(c_tc_u)=\Phi(c_t)\Phi(c_u),\qquad
  \ker(\Psi)=A(2^n)\ker(\Phi)+\ker(\Phi)A(2^n).\]
  We get
  $\Psi(f)=\Psi(\sum_{1\le t,u \le s}\alpha ^{t,u}_{f}c_tc_u)$
  and so
  \[\Psi(f)=p^{YY}_{f}YY+p^{YZ}_{f}YZ+p^{ZY}_{f}ZY+p^{ZZ}_{f}ZZ\]
  for some polynomials $p^{YY}_{f},\dots,p^{ZZ}_{f}\in R$. Define a $\K$-linear subspace
  $E$ of $R$ as follows:
  \[E=\sum_{f\in Q}\K p^{YY}_{f}+\K p^{YZ}_{f}+\K p^{ZY}_{f}+\K
  p^{ZZ}_{f}.
  \]
  Observe that $\dim E\leq 4 \dim Q$, and hence $\dim T+\dim E\leq
  \dim A(2^n)-2$.

  Given $r\in R$ and $\boldsymbol\eta =
  (\eta_1,\dots,\eta_s),\boldsymbol\zeta=(\zeta_1,\dots,\zeta_s)\in\K^s$,
  we denote by $r(\boldsymbol\eta,\boldsymbol\zeta)$ the image in $\K$
  of $r$ after substituting $y_t:=\eta_t$ and $z_t:=\zeta_t$, for all
  $t=1,\dots,s$.  We will show that there are
  $\boldsymbol\eta,\boldsymbol\zeta\in\K^s$ such that
  $g(\boldsymbol\eta,\boldsymbol\zeta)=0$ for all $g\in E$; namely, if
  we substitute $y_t:=\eta_t$ and $z_t:=\zeta_t$ for all $t=1,\dots,s$
  in $g$, we get $0$.  Moreover, we will find $u,v\in\{1,\dots,s\}$
  such that $\Phi(c_u)(\boldsymbol\eta,\boldsymbol\zeta)$ and
  $\Phi(c_v)(\boldsymbol\eta,\boldsymbol\zeta)\in \K Y+\K Z$ are linearly
  independent over $\K$.

  We proceed by contradiction.  Assume that all assignments
  $y_t:=\eta_t$, $z_t:=\zeta_t$ satisfying
  $E(\boldsymbol\eta,\boldsymbol\zeta)=0$ also satisfy
  $\eta_u\zeta_v-\zeta_u\eta_v=0$ for all $u, v\in\{1,\dots,s\}$. The
  polynomials $y_tz_u-z_ty_u$ vanish on the zero-set of $E$, so by
  Hilbert's Nullstellensatz there is $m\in\N$ such that
  $(y_tz_u-z_ty_u)^m\in RE$.  It follows that $R/RE$ has
  Gelfand-Kirillov dimension at most $s+1$: it is a finite dimensional
  module over
  $\sum_{X\subset\{y_1,\dots,y_s,z_1,\dots,z_s\}\colon\#X=s+1}\K[X]$.

  On the other hand, by applying $\dim(E)$ times
  Lemma~\ref{lem:GKdown1}, we see that the dimension of $R/RE$ is at
  least $2s-\dim E$. Since $\dim(E)\le s-2$, we have reached a
  contradiction. It follows that we can find
  $\boldsymbol\eta,\boldsymbol\zeta\in\K^s$ and indices
  $u,v\in\{1,\dots,s\}$ such that $\eta_u\zeta_v-\zeta_u\eta_v\neq0$
  and $E(\boldsymbol\eta,\boldsymbol\zeta)=0$ .

  Define now a $\K$-linear mapping $\overline\Phi\colon C\to\K Y+\K Z$
  by $\overline\Phi(c_t)=\eta_tY+\zeta_tZ$ for $t=1,\dots,s$. Using
  $C\oplus T=A(2^n)$, extend it to a mapping $\overline\Phi\colon
  A(2^n)\to\K Y+\K Z$ by the condition $T\subseteq\ker
  \overline\Phi$. Then $\overline\Phi(c_u)=\eta_uY+\zeta_uZ$ and
  $\overline\Phi(c_v)=\eta_vY+\zeta_vZ$ give two elements that are
  linearly independent over $\K$. As before, define
  $\overline\Psi\colon A(2^{n+1})\to\K\langle Y,Z\rangle$ by
  \[\overline\Psi(c_tc_u)=\overline\Phi(c_t)\overline\Phi(c_u),\qquad
  \ker(\overline\Psi)=A(2^n)\ker(\overline\Phi)+\ker(\overline\Phi)A(2^n),\]
  and set
  \[F:=\ker\overline\Phi.
  \]
  By construction, we have $T\subseteq\ker\overline\Phi$ so
  $T\subseteq F$ as required. Because
  $\overline\Phi(c_u):=\eta_uY+\zeta_uZ$ and
  $\overline\Phi(c_v):=\eta_vY+\zeta_vZ$ are $\K$-linearly
  independent, we have $\dim F\leq \dim A(2^n)-2$ as
  required. Finally, from $E(\boldsymbol\eta,\boldsymbol\zeta)=0$ we
  get
  $p^{YY}_f(\boldsymbol\eta,\boldsymbol\zeta)=\cdots=p^{ZZ}_f(\boldsymbol\eta,\boldsymbol\zeta)=0$
  for all $f\in Q$, so $\overline\Psi(Q)=0$ and therefore $Q\subseteq
  \ker(\overline\Psi)=A(2^n)\ker(\overline\Phi)+\ker(\overline\Phi)A(2^n)=FA(2^n)+A(2^n)F$.
\end{proof}

\begin{lemma}\label{lem:s2}
  Suppose that sets $U(2^m),V(2^m)$ were already constructed for all
  $m<n$, and satisfy the conditions of Theorem~\ref{8props}. Let
  $D(k)$ be as in Theorem~\ref{thm:gosha}.  Define a $\K$-linear
  subspace $Q\subseteq A(2^{n+1})$ as follows:
  \[Q=\sum_{k=2^n+2^{n-1}}^{2^n+2^{n-1}+2^{n-2}}\sum_{f\in D(k)}\sum_{i+j=2^{n+1}-k}V^{>}(i)fV^{<}(j).\]
  Then $\dim Q\leq \frac14(\frac12\dim V(2^{n-1})^2-2)$.
\end{lemma}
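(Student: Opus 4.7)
\medskip
\noindent\textbf{Proof plan.}
The plan is a direct dimension count: bound each summand $V^>(i)fV^<(j)$ by $\dim V^>(i)\dim V^<(j)$, then control the latter through Lemma~\ref{lem:3.4}, and finally count the number of summands. First I would note that if $D(k)=0$ for every $k$ in the range $[2^n+2^{n-1},\,2^n+2^{n-1}+2^{n-2}]$, then $Q=0$ and the inequality is trivial; otherwise $n\in Y$ by definition~\eqref{def:Y}, which is precisely the hypothesis needed to invoke Lemma~\ref{lem:3.4} and to use the formula $[V(2^{n-1})]=2^{2^{e(n)}}$ from Theorem~\ref{8props}(\ref{prop:2}).

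Next I would use subadditivity of dimension: for each $k,f,i,j$ the space $V^>(i)fV^<(j)$ is spanned by the products $v f w$ over basis elements $v\in V^>(i)$, $w\in V^<(j)$, hence has dimension at most $[V^>(i)]\,[V^<(j)]$. Summing,
\[\dim Q\le\sum_{k=2^n+2^{n-1}}^{2^n+2^{n-1}+2^{n-2}}\dim D(k)\sum_{i+j=2^{n+1}-k}[V^>(i)]\,[V^<(j)].\]
For $k$ in the given range one checks $i+j=2^{n+1}-k\in[2^{n-2},2^{n-1}]$, so in particular $i+j\le 2^{n-1}+2^{n-2}$, and Lemma~\ref{lem:3.4} (together with the identity $[V^<(\alpha)]=[V^>(\alpha)]$ used in its proof) yields
\[[V^>(i)]\,[V^<(j)]\le \frac{1}{2^{(n+1)(d+2)+2}}[V(2^{n-1})]^2.\]

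Then I would plug in the elementary counts: the number of admissible values of $k$ is $2^{n-2}+1\le 2^{n-1}$; the number of decompositions $i+j=2^{n+1}-k$ is at most $2^{n-1}+1\le 2^n$; and $\dim D(k)\le k^d\le (2^{n+1})^d=2^{(n+1)d}$ by hypothesis~(\ref{gosha:1}.4). Multiplying everything together, the powers $2^{(n+1)d}$ cancel and I expect the exponents to collapse to
\[\dim Q\le 2^{2n-1}\cdot 2^{(n+1)d}\cdot 2^{-(n+1)(d+2)-2}[V(2^{n-1})]^2=\tfrac{1}{32}[V(2^{n-1})]^2.\]

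The final step, and the only subtle point, is to compare $\tfrac{1}{32}[V(2^{n-1})]^2$ with the target $\tfrac14\big(\tfrac12[V(2^{n-1})]^2-2\big)=\tfrac18[V(2^{n-1})]^2-\tfrac12$. This reduces to $\tfrac{3}{32}[V(2^{n-1})]^2\ge \tfrac12$, i.e.\ $[V(2^{n-1})]^2\ge 16/3$. Here I would invoke $n\in Y$ and Theorem~\ref{8props}(\ref{prop:2}): since $e(n)=\lfloor\log(5dn)\rfloor\ge 1$ (using $d\ge 50$), we have $[V(2^{n-1})]=2^{2^{e(n)}}\ge 4$, so $[V(2^{n-1})]^2\ge 16$, and the inequality follows with plenty of room. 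I expect the main obstacle to be bookkeeping of the exponents in step three, but no genuine difficulty arises.
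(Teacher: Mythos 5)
Your proof is correct and follows essentially the same route as the paper's: bound each summand $V^>(i)fV^<(j)$ by Lemma~\ref{lem:3.4}, count the number of terms in each of the three sums, and finish with $[V(2^{n-1})]\ge 4$. Your count of the pairs $(i,j)$ is slightly tighter (at most $2^{n-1}+1\le 2^n$ rather than the paper's $2^{n+1}$), yielding $\tfrac1{32}$ in place of $\tfrac1{16}$, but both constants comfortably clear the target, and your explicit treatment of the trivial case $n\notin Y$ is a small but welcome clarification of a point the paper leaves implicit.
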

\begin{proof}
  By Lemma~\ref{lem:3.4}, the inner sum has dimension at most $\dim
  V(2^{n-1})^2/2^{(n+1)(d+2)+2}$. Summing over all $i+j=2^{n+1}-k$
  multiplies by a factor of $2^{n+1}$ at most; summing over all $f\in
  D(k)$ multiplies by a factor of $\dim D(k)\le k^d\le 2^{(n+1)d}$ at
  most; and summing over all $k$ multiplies by a factor of $2^{n-1}$
  at most. Therefore, $\dim Q \leq 2^{n-1}2^{(n+1)d}2^{n+1}\frac{\dim
    V(2^{n-1})^2}{2^{(n+1)(d+2)+2}} \leq \frac1{16}\dim V(2^{n-1})^2$.

  Observe now that $\frac14\dim V(2^{n-1})^2\leq \frac12\dim V(2^{n-1})^2-2$,
  because $\dim V(2^{n-1})=2^{2^{e(n)}}\geq2^{2^1}\geq4$.  We get
  $\dim Q \leq\frac14(\frac12\dim V(2^{n-1})^2-2)$ as required.
\end{proof}

We are now ready to construct the space $F(2^n)$. Assume
$U(2^m),V(2^m)$ were already constructed for all $m<n$, and satisfy
the conditions of Theorem~\ref{8props}.
\begin{proposition}\label{prop:F}
  Let $\K$ be an algebraically closed field. With notation as in Lemma~\ref{lem:F'}, there is a linear $\K$-space
  $F(2^n)\subseteq A(2^n)$ satisfying $\dim
  F(2^n)\le \dim V(2^{n-1})^2-2$ and $F'(2^{n})\subseteq F(2^{n})+U(2^{n-1})A(2^{n-1})+A(2^{n-1})U(2^{n-1})$. Moreover, for all
  $k\in\{2^n+2^{n-1},\dots,2^n+2^{n-1}+2^{n-2}\}$ we have
  \begin{multline*}
    AD(k)A\cap A(2^{n+1})\subseteq A(2^n)F(2^n)+F(2^n)A(2^n)\\
    +A(2^{n-1})U(2^{n-1})A(2^n)+A(2^n)U(2^{n-1})A(2^{n-1})\\
    +U(2^{n-1})A(2^n+2^{n-1})+A(2^n+2^{n-1})U(2^{n-1}).
  \end{multline*}
\end{proposition}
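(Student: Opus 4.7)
The plan is to work inside the ``visible'' subspace $W:=V(2^{n-1})V(2^{n-1})\subseteq A(2^n)$, project both $F'(2^n)$ and the space $Q$ from Lemma~\ref{lem:s2} into $W$ and $W\cdot W$ respectively, and then apply the construction of Lemma~\ref{lem:s1} with $W$ playing the role of $A(2^n)$. Set $X:=A(2^{n-1})U(2^{n-1})+U(2^{n-1})A(2^{n-1})$; by Condition~\ref{prop:5} we have $A(2^n)=W\oplus X$, and since multiplication in the free algebra $A$ is injective this yields the direct-sum decomposition $A(2^{n+1})=W\cdot W\oplus W\cdot X\oplus X\cdot W\oplus X\cdot X$. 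A short direct expansion (using $U(2^{n-1})\subseteq A(2^{n-1})$ and $A(2^{n-1})A(2^{n-1})=A(2^n)$) shows that $W\cdot X$, $X\cdot W$ and each of the four summands of $X\cdot X$ is contained in the sum $\mathcal C$ of the four correction terms on the right-hand side of the proposition; for example $A(2^{n-1})U(2^{n-1})A(2^{n-1})U(2^{n-1})\subseteq A(2^{n-1})U(2^{n-1})A(2^n)$.

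Let $\pi_W\colon A(2^n)\to W$ and $\pi\colon A(2^{n+1})\to W\cdot W$ denote the projections along $X$ and along $W\cdot X+X\cdot W+X\cdot X$ respectively. Set $T:=\pi_W(F'(2^n))\subseteq W$ and $\widetilde Q:=\pi(Q)\subseteq W\cdot W$; by Lemma~\ref{lem:F'} and Lemma~\ref{lem:s2} we have $\dim T\le\tfrac12\dim V(2^{n-1})^2$ and $\dim\widetilde Q\le\tfrac14(\tfrac12\dim V(2^{n-1})^2-2)$. The proof of Lemma~\ref{lem:s1} now transfers verbatim when $A(2^n)$ is replaced by $W$ throughout, since that proof uses only a basis of a complement of $T$ and an application of Hilbert's Nullstellensatz in a polynomial ring, neither of which is sensitive to the ambient space. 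Since $\dim T+4\dim\widetilde Q\le\dim W-2$, this yields $F(2^n)\subseteq W$ of dimension at most $\dim V(2^{n-1})^2-2$, with $T\subseteq F(2^n)$ and $\widetilde Q\subseteq F(2^n)W+WF(2^n)\subseteq F(2^n)A(2^n)+A(2^n)F(2^n)$. The containment $F'(2^n)\subseteq F(2^n)+X$ is then immediate from $T\subseteq F(2^n)$ and $\ker\pi_W=X$.

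For the $AD(k)A$-inclusion, take $adb\in A(|a|)D(k)A(|b|)$ with $|a|+|b|=2^{n+1}-k\le 2^{n-1}$. Using Lemma~\ref{lem:A=U+V}, write $a=v_a+u_a$ and $b=v_b+u_b$ with $v_a\in V^>(|a|)$, $u_a\in U^>(|a|)$, $v_b\in V^<(|b|)$, $u_b\in U^<(|b|)$, and expand $adb$ into four summands. The product $v_adv_b$ lies in $V^>(|a|)D(k)V^<(|b|)\subseteq Q$ by the definition of $Q$, so $v_adv_b=\pi(v_adv_b)+(v_adv_b-\pi(v_adv_b))\in\widetilde Q+\mathcal C\subseteq F(2^n)A(2^n)+A(2^n)F(2^n)+\mathcal C$. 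For the three remaining summands, the hypothesis $|a|,|b|\le 2^{n-1}$ (which forces $k\ge 2^n+2^{n-1}$) enables the free-algebra factorisation $A(2^{n+1}-|a|)=A(2^{n-1}-|a|)A(2^n+2^{n-1})$, and Lemma~\ref{lem:A=U+V} then gives
\[U^>(|a|)A(2^{n+1}-|a|)=U^>(|a|)A(2^{n-1}-|a|)A(2^n+2^{n-1})\subseteq U(2^{n-1})A(2^n+2^{n-1}),\]
and symmetrically $A(2^{n+1}-|b|)U^<(|b|)\subseteq A(2^n+2^{n-1})U(2^{n-1})$, so that $u_adv_b$, $v_adu_b$ and $u_adu_b$ all lie in $\mathcal C$. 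This $U$-absorption step, which leans essentially on the bound $|a|,|b|\le 2^{n-1}$, is the main subtlety of the proof; everything else is bookkeeping.
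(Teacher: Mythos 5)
Your proof is correct, and while it reaches the same conclusions by the same broad strategy (combine Lemma~\ref{lem:F'} and Lemma~\ref{lem:s2} with the Nullstellensatz machinery of Lemma~\ref{lem:s1}), the way you deploy Lemma~\ref{lem:s1} differs from the paper in an interesting way. The paper applies Lemma~\ref{lem:s1} \emph{as a black box} in the full ambient $A(2^n)$: it sets $T:=F'(2^n)+X$ (where $X=A(2^{n-1})U(2^{n-1})+U(2^{n-1})A(2^{n-1})$), checks $\dim T+4\dim Q\le\dim A(2^n)-2$, obtains a space $F\subseteq A(2^n)$ with $F\supseteq X$ and $Q\subseteq FA(2^n)+A(2^n)F$, and only \emph{afterwards} defines $F(2^n)$ as a linear complement of $X$ inside $F$, substituting $F=F(2^n)\oplus X$ into the $AD(k)A$ inclusion and absorbing the $X$-terms into the correction summands. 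You instead project $F'(2^n)$ and $Q$ into $W=V(2^{n-1})V(2^{n-1})$ and $W\cdot W$ along the complement $X$, and then rerun the Nullstellensatz argument with $W$ playing the role of $A(2^n)$, getting $F(2^n)\subseteq W$ directly. The two routes are essentially dual: the paper enlarges $T$ and then subtracts $X$ out at the end; you project $X$ out at the start. What the paper's route buys is that Lemma~\ref{lem:s1} is used exactly as stated, with no need to re-examine its proof; what your route buys is that you avoid the final complement/substitution step, but at the cost of having to check (as you do, correctly, though somewhat briskly) that the decomposition $W\cdot W=C'C'\oplus C'T'\oplus T'C'\oplus T'T'$, the construction of $\Psi$, and the Hilbert Nullstellensatz step all carry over when the ambient is $W$ rather than $A(2^n)$ --- which is true because the proof of Lemma~\ref{lem:s1} only uses freeness of multiplication and the dimension budget $s=\dim(\text{ambient})-\dim T$. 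The $U$-absorption in your last paragraph, using $U^>(i)A(2^{n-1}-i)\subseteq U(2^{n-1})$ from Lemma~\ref{lem:A=U+V} and the bound $|a|+|b|\le 2^{n-1}$, is the same as the paper's.
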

\begin{proof}
  Consider the space $Q\subseteq A(2^{n+1})$ defined in
  Lemma~\ref{lem:s2}, and the space
  $T:=F'(2^n)+U(2^{n-1})A(2^{n-1})+A(2^{n-1})U(2^{n-1})\subseteq
  A(2^n)$, with $F'(2^n)$ as in Lemma~\ref{lem:F'}. We have $4\dim
  Q\le\frac12\dim V(2^{n-1})^2-2$ by Lemma~\ref{lem:s2}, and, using
  $A(2^n)=V(2^{n-1})^2\oplus(U(2^{n-1})A(2^{n-1})+A(2^{n-1})U(2^{n-1}))$,
  \begin{align*}
    \dim T &\leq \dim F'(2^n)+(\dim A(2^n)-\dim V(2^{n-1})^2)\\
    &\leq \dim A(2^n)-\tfrac12\dim V(2^{n-1})^2.
  \end{align*}
  Therefore, $\dim T+4\dim Q\leq \dim A(2^n)-2$ and we may apply
  Lemma~\ref{lem:s1} to obtain a set $F$.

  Consider $i,j,k\in\N$ with $i+j+k=2^{n+1}$, and consider $f\in
  D(k)$. By Lemma~\ref{lem:A=U+V}, we have $U^>(i)+V^>(i)=A(i)$ and
  $U^<(j)+V^<(j)=A(j)$. Therefore,
  \begin{align*}
    A(i)fA(j) &= (U^>(i)+V^>(i))f(U^<(j)+V^<(j))\\
    &\subseteq V^>(i)fV^<(j) + U^>(i)A(2^{n+1}-i)+A(2^{n+1}-j)U^<(j),
    \intertext{so}
    A(i)D(k)A(j) &\cap A(2^{n+1})\subseteq Q + U^>(i)A(2^{n+1}-i)+ A(2^{n+1}-j)U^<(j).
  \end{align*}
  By assumption on $k$, we have $i+j\leq 2^{n-1}$, so
  Lemma~\ref{lem:A=U+V} yields
  $U^>(i)A(2^{n+1}-j)=(U^>(i)A(2^{n-1}-i))A(2^n+2^{n-1})\subseteq
  U(2^{n-1})A(2^n+2^{n-1})$, and similarly
  $A(2^n+2^{n-1})U^<(j)\subseteq A(2^n+2^{n-1})U(2^{n-1})$.

  Then, Lemma~\ref{lem:s1} gives $Q\subseteq
  A(2^n)F+FA(2^n)$, so
  \begin{align}
    AD(k)A \cap A(2^{n+1})&\subseteq Q + \sum_{i+j=2^{n+1}-k}U^>(i)A(2^{n+1}-i)+
    A(2^{n+1}-j)U^<(j)\notag\\
    & \subseteq A(2^n)F+FA(2^n)\label{eq:AFA2}\\
    & +A(2^{n-1})U(2^{n-1})A(2^n)+A(2^n)U(2^{n-1})A(2^{n-1})\notag\\
    & +U(2^{n-1})A(2^n+2^{n-1})+A(2^n+2^{n-1})U(2^{n-1}).\notag
  \end{align}

  Recall $U(2^{n-1})A(2^{n-1})+A(2^{n-1})U(2^{n-1})\subseteq
  T\subseteq F$.  Let $F(2^{n})\subseteq F$ be a linear $\K$-space
  satisfying $F(2^{n})\oplus
  (U(2^{n-1})A(2^{n-1})+A(2^{n-1})U(2^{n-1}))=F$. The last
  claim of the theorem holds when we substitute this equation
  into~\eqref{eq:AFA2}.

  Observe next that we have $\dim F(2^{n})= \dim F-\dim
  U(2^{n-1})A(2^{n-1})+A(2^{n-1})U(2^{n-1})$, so $\dim F(2^{n})\leq
  \dim A(2^n)-2-(\dim A(2^n)-\dim V(2^{n-1})^2)\leq \dim V(2^{n-1})^2
  -2$, and the first claim of our theorem holds.  Since
  $F'(2^{n})\subseteq F=F(2^{n})+U(2^{n-1})A(2^{n-1})+A(2^{n-1})U(2^{n-1})$, the proof is finished.
\end{proof}

\section{Proof of Theorems~\ref{thm:gosha} and~\ref{cor:gosha}}\label{ss:proof}

We are now ready to prove our main result, which implies
Theorem~\ref{thm:gosha}. By~\cite{lenagan-s:nillie}*{Theorem~5}, the
set $\mathscr E$ defined in~\eqref{def:E} is an ideal in $A$.
\begin{theorem}\label{AbaroverEnil}
  The algebra $A/\mathscr E$ is an algebra of Gelfand-Kirillov
  dimension at most $45d$, which is infinite dimensional over $\K$,
  and in which the image of $D(k)$ is zero for all $k$.
\end{theorem}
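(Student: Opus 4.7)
The construction begins with the recursive setup. Starting from $V(2^0) = \K x + \K y$, $U(2^0)=0$, we inductively build $F(2^n), U(2^n), V(2^n)$: given the lower-degree data satisfying Theorem~\ref{8props}, we set $F(2^n)=0$ if $n\notin Y$, otherwise invoke Proposition~\ref{prop:F} to produce $F(2^n)\subseteq A(2^n)$ of dimension at most $\dim V(2^{n-1})^2-2 = (2^{2^{e(n)}})^2-2$ (using Condition~\ref{prop:2}); Theorem~\ref{8props} then extends the construction to $U(2^n), V(2^n)$ satisfying all the listed conditions. We define $\mathscr E$ by~\eqref{def:E}; it is an ideal, as recorded in Section~\ref{ss:UV}. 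The bound $\GKdim(A/\mathscr E) \le 45d$ is then immediate from Lemma~\ref{lem:3.5}.

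To show $D(k)$ maps to zero, note that by hypotheses~\ref{gosha:1}--\ref{gosha:3} all the nonzero $D(k)$ have $k$ in ranges $\{2^n+2^{n-1},\dots,2^n+2^{n-1}+2^{n-2}\}$ for $n\in Y$. Proposition~\ref{prop:F} guarantees $F'(2^n)\subseteq F(2^n) + U(2^{n-1})A(2^{n-1}) + A(2^{n-1})U(2^{n-1})$, and combining with Conditions~\ref{prop:4} and~\ref{prop:6} forces $F'(2^n)\subseteq U(2^n)$. Substituting into Lemma~\ref{lem:F'} gives, for every admissible $(i,j)$ with $i+j=k-2^n$,
\[D(k) \subseteq A(i)\,U(2^n)\,A(j) + U^<(i)\,A(k-i) + A(k-j)\,U^>(j).\]
Given $r\in D(k)$ and $s\in A(a), t\in A(b)$ with $a+k+b = 2^{n+2}$, the goal is to show $srt \in U(2^{n+1})A(2^{n+1}) + A(2^{n+1})U(2^{n+1})$. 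When $a\ge 2^{n+1}$, peeling off a length-$2^{n+1}$ prefix of $s$ reduces to an element of $AD(k)A\cap A(2^{n+1})$, which by Proposition~\ref{prop:F} and Conditions~\ref{prop:4} and~\ref{prop:6} lies inside $U(2^{n+1})$; the case $b\ge 2^{n+1}$ is symmetric. When $a,b<2^{n+1}$ the midpoint split falls inside $r$: here we choose $(i,j)$ in Lemma~\ref{lem:F'} so that either $sa$ or $bt$ has length $2^n$, forcing the corresponding $F'(2^n)$-product into $A(2^n)U(2^n) \subseteq U(2^{n+1})$ via Condition~\ref{prop:6}; the remaining $U^<, U^>$ summands then land in $U(2^{n+1})A(2^{n+1}) + A(2^{n+1})U(2^{n+1})$ via Lemma~\ref{lem:A=U+V}. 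The main obstacle is the combinatorial analysis ensuring that some admissible choice of $(i,j)$ covers every $(a,b)$ in this middle regime, which relies on the narrow numerical range of $k$ prescribed in~\ref{gosha:2}.

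For infinite-dimensionality, we exhibit a witness in $A(2^n)\setminus\mathscr E(2^n)$ for every $n\ge 0$. By Conditions~\ref{prop:1},~\ref{prop:3} and~\ref{prop:7}, $V(2^{n+1})$ is spanned by at least two monomials inside $V(2^n)V(2^n)$; fix such a monomial $m\in V(2^{n+1})$, decompose $m=u_au_b$ with $u_a,u_b\in V(2^n)$ monomials, and pick any monomial $s\in V(2^{n+1})$. Under the identification $A(2^{n+2}) \cong A(2^{n+1}) \otimes A(2^{n+1})$, the product $s\cdot u_a\cdot u_b = sm$ is the simple tensor $s\otimes m \in V(2^{n+1}) \otimes V(2^{n+1})$. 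Since $U(2^{n+1})A(2^{n+1}) + A(2^{n+1})U(2^{n+1})$ is exactly the complement of $V(2^{n+1})\otimes V(2^{n+1})$ in this decomposition, $sm$ lies outside $UA+AU$, so $u_a \notin \mathscr E(2^n)$. Hence $\dim(A(2^n)/\mathscr E(2^n)) \ge 1$ for every $n\ge 0$, and $\dim(A/\mathscr E) = \infty$.
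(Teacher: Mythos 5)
Your treatment of the Gelfand-Kirillov bound and of the inductive construction of $U(2^n), V(2^n), F(2^n)$ matches the paper's; the infinite-dimensionality argument is a pleasant variant (you exhibit a direct witness $u_a\in A(2^n)\setminus\mathscr E(2^n)$ instead of arguing by contradiction), and it is correct --- it rests on the same facts, namely $V(2^{n+1})\otimes V(2^{n+1})$ being complementary to $U(2^{n+1})A(2^{n+1})+A(2^{n+1})U(2^{n+1})$ and $V(2^n)$ being spanned by monomials.

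The gap is in the ``middle regime'' of the $D(k)\subseteq\mathscr E$ argument. You propose to deal uniformly with all pairs $a,b<2^{n+1}$ (where $a+b+k=2^{n+2}$) by choosing $(i',j')$ in Lemma~\ref{lem:F'} so that $a+i'=2^n$ or $b+j'=2^n$; this forces $i'=2^n-a$ or $j'=2^n-b$ respectively, which requires $a\le 2^n$ or $b\le 2^n$. But the subcase $2^n<a,b<2^{n+1}$ is genuinely reachable: for instance $k=2^n+2^{n-1}+1$, $a=2^n+1$, $b=2^{n+2}-k-a=2^n+2^{n-2}-2$. In that subcase no admissible $(i',j')$ of the kind you prescribe exists, and ``the narrow numerical range of $k$ in~\ref{gosha:2}'' does not rescue it. The paper handles this subcase with a different tool: it applies Proposition~\ref{prop:F} to $A(a-2^n)D(k)A(b-2^n)\subseteq AD(k)A\cap A(2^{n+1})$, obtaining the stronger inclusion $AD(k)A\cap A(2^{n+1})\subseteq A(2^n)U(2^n)+U(2^n)A(2^n)$, and then pre- and post-multiplies by $A(2^n)$ and uses Condition~\ref{prop:6}. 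Your sketch only invokes Lemma~\ref{lem:F'}, which is too weak here; you need both Lemma~\ref{lem:F'} (for the lopsided subcases where one of $a,b$ drops below $2^n$) and Proposition~\ref{prop:F} (for the balanced subcase $a,b\ge 2^n$, and also for $a\ge 2^{n+1}$ or $b\ge 2^{n+1}$, which you do use it for). As written, the step you flag as ``the main obstacle'' is not merely a verification you left out --- it is a case your chosen strategy cannot cover.
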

\begin{proof}
  We will apply inductively Theorem~\ref{8props} and
  Proposition~\ref{prop:F}. We start the induction with
  $U(2^0)=F(2^0)=0$ and $V(2^0)=\K x+\K y$. Assume now that we
  constructed $U(2^m),V(2^m)$ for all $m<n$. If $n\in Y$, we construct
  $F(2^n)$ using Proposition~\ref{prop:F}, while if $n\notin Y$, then
  we set $F(2^n)=0$. We then construct $U(2^n)$, $V(2^n)$ using
  Theorem~\ref{8props}.

  Consider now $k\in\N$ with $2^n<k<2^{n+1}$. We claim that $D(k)$ is
  contained in $\mathscr E$; to see that, it suffices to check
  $A(i)D(k)A(j)\subset T:=A(2^{n+1})U(2^{n+1})+U(2^{n+1})A(2^{n+1})$
  for all $i,j\in\N$ with $i+j+k=2^{n+2}$.

  Notice first that by Theorem~\ref{8props},
  $U(2^{n-1})A(2^{n-1})+U(2^{n-1})A(2^{n-1})\subseteq U(2^{n})$ and
  $F(2^{n})\subseteq U(2^{n})$. By Proposition~\ref{prop:F}, we get
  $F'(2^{n})\subseteq U(2^{n})$, and combining with
  Condition~\ref{prop:6} gives $A(2^n)F(2^n)+F(2^n)A(2^n)\subseteq
  U(2^{n+1})$ and $A(2^n)F'(2^n)+F'(2^n)A(2^n)\subseteq U(2^{n+1})$.

  If $i\ge2^{n+1}$, we may apply Proposition~\ref{prop:F} to get
  $A(i-2^{n+1})D(k)A(j)\subseteq A(2^n)F(2^n)+F(2^n)A(2^n)+U(2^{n+1})\subseteq U(2^{n+1})$, so
  $A(i)D(k)A(j)\subseteq T$. Similarly, if $j\ge2^{n+1}$, we get
  $A(i)D(k)A(j-2^{n+1})\subseteq U(2^{n+1})$ so $A(i)D(k)A(j)\subseteq T$. If
  $i,j\ge2^n$ then $A(i-2^n)D(k)A(j-2^n)\subseteq A(2^n)U(2^n)+U(2^n)A(2^n)$
  so $A(i)D(k)A(j)\subseteq T$.

  If $i<2^n$ and $j<2^{n+1}$, then $D(k)\subseteq
  A(2^n-i)F'(2^n)A(2^{n+1}-j)+U^<(2^n-i)A(2^{n+1}+2^n-j)+A(2^{n+1}-i)U^>(2^{n+1}-j)$
  by Lemma~\ref{lem:F'}, so $A(i)D(k)A(j)\subseteq
  A(2^n)F'(2^n)A(2^{n+1})+U(2^{n})A(2^{n+1}+2^{n})+A(2^{n+1}+2^{n})U(2^{n})\subseteq T$. The case $i<2^{n+1},j<2^n$ is
  handled similarly.

  We may now conclude that $D(k)=0$ holds in $A/\mathscr E$.  By
  Lemma~\ref{lem:3.5}, the Gelfand-Kirillov dimension of $A/\mathscr
  E(n)$ is at most $45d$.

  Finally, we show that $A/\mathscr E$ is infinite dimensional over
  $\K$, as in~\cite{lenagan-s:nillie}*{Theorems~14,15}. Suppose, by
  contradiction, that $A/\mathscr E$ is finite-dimensional.  Since
  $A/\mathscr E$ is graded, we have $V(2^n)\subseteq\mathscr E$ for
  some $n\in\N$.  By definition of $\mathscr E$, we then have
  $V(2^n)^4\subseteq U(2^{n+2})$. Recall that $V(2^{n+2})\subseteq
  V(2^{n})^{4}$ by Condition~\eqref{prop:7}, hence
  $V(2^{n+2})\subseteq U(2^{n+2})$, a contradiction, because
  $V(2^{n+2})\cap U(2^{n+2})=0$ by Condition~\eqref{prop:5}.
\end{proof}

\subsection{Proof of Theorem~\ref{cor:gosha}}
We now describe the changes to be made to the argument above to prove
Theorem~\ref{cor:gosha}.  Assume that almost all $D(k)$ are zero,
namely that $D(k)=0$ for all $k\ge 2^t$.  We first construct the sets
$U(2^n)$, $F(2^n)$, $V(2^n)$ for all $n\leq t$, in the same way as in
the first part of the proof above, using Theorem~\ref{8props} and
Proposition~\ref{prop:F}.

Now, from the construction in Proposition~\ref{prop:F}, we have
$F(2^n)=0$ for all $n>t$, so we may add an assumption to the
construction of $U(2^n),V(2^n)$ that $\dim V(2^m)=1$ for all $m>t$. To
see how it can be done, assume $V(2^t)=\K m_1+\K m_2$ for some
$m_1,m_2\in A(2^n)$. Then define
\[V(2^{t+1})=\K m_1m_1\text{ and }U(2^{t+1})=(U(2^t)+\K m_2)A(2^t)+ A(2^{t})(U(2^t)+\K m_2),\]
and for all $n>t$ define
\[V(2^{n+1})=V(2^n)V(2^n)\text{ and }U(2^{n+1})=U(2^n)A(2^n)+A(2^n)U(2^n).
\]
We construct the sets $U^<(k),\dots,V^>(k)$ as
in~(\ref{def:U<}--\ref{def:V>}), and the ideal $\mathscr E$ as
in~\eqref{def:E}.

We will now show that $A/\mathscr E$ has at most quadratic growth.  By
Proposition~\ref{prop:A/Egrowth}, we have $\dim A(k)/ \mathscr
E(k)\leq \sum_{j=0}^k\dim V^<(k-j)\dim V^>(j)$. Observe that there is
$C\in\N$ such that $\dim V^{>}(k), \dim V^{<}(k)<C$ for all $k\in\N$,
because $\dim V(2^n)=1$ for $n>t$.  Hence $\dim A(k)/\mathscr
E(k)\leq (k+1)C^2$. We conclude that $A/\mathscr E$ has at most
quadratic growth.
 The proof that $A/\mathscr E$ is infinite dimensional is the same as in
  Theorem~\ref{cor:gosha}.
\section{Growth of algebras}\label{ss:arbitrary}
We prove Theorem~\ref{thm:arbitrary} in this~\S. First, we write
$d=f(1)$, and note that $f(n)\le d^n$ follows from
submultiplicativity. We will construct a $d$-generated monomial
algebra $B$ with growth approximately $f$, as a quotient of the free
algebra $A=\K\langle x_1,\dots,x_d\rangle$.

Let $M(n)$ denote the set of monomials in $A$ of degree $n$, and set
$M=\bigcup_{n\ge0}M(n)$. We call elements of $M$ alternatively
\emph{monomials} or \emph{words}. We construct subsets $W(2^n)$ of
monomials in $M(2^n)$, inductively as follows. Firstly,
$M(1)=W(1)=\{x_1,\dots,x_d\}$.  Assuming $W(2^{n-1})$ has been
constructed, let $C(2^n)$ be an arbitrary subset of $W(2^n)$ of
cardinality $\lceil f(2^{n+1})/f(2^n)\rceil$. Define then
$W(2^{n+1})=C(2^n)W(2^n)$. Set $W=\bigcup_{n\ge0}W(2^n)$. Finally, let
\[B=A/\langle w\in M\mid AwA\cap W=\emptyset\rangle\] be the monomial
algebra with relators all words that are not subwords of some word in
$W$.

Since $B$ is a monomial algebra, its growth is computed by estimating
the number of non-zero monomials of given length in $B$. We do this
at powers of $2$.

\begin{lemma}\label{growth:lower}
  The set $W$ is linearly independent in $B$.
\end{lemma}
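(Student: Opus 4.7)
The plan is to identify the monomial basis of $B$ explicitly, and then observe that every element of $W$ lies in it. Since $B$ is a monomial algebra (the quotient of $A$ by a monomial ideal), its underlying vector space has a canonical $\K$-basis consisting of those elements of $M$ that do not belong to the defining ideal; in particular, distinct nonzero monomials are automatically $\K$-linearly independent in $B$, so the task reduces to verifying that the elements of $W$ are pairwise distinct and all nonzero in $B$.

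First I would prove the characterization: a monomial $m\in M$ is nonzero in $B$ if and only if $m$ is a subword of some element of $W$. For the "only if" direction, if $m$ is not a subword of any element of $W$, then $AmA\cap W=\emptyset$, so $m$ is itself one of the defining relators and hence vanishes in $B$. For the "if" direction, if $m$ is a subword of some $w'\in W$, then every subword $v$ of $m$ is also a subword of $w'$, so $AvA\cap W\ni w'\neq\emptyset$; hence no factor of $m$ lies in the generating set of the ideal, and $m$ survives in $B$.

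Next I would verify that the elements of $W$ are pairwise distinct as monomials of $A$. Words belonging to distinct $W(2^m)$ and $W(2^n)$ have different degrees, so it suffices to check distinctness within each $W(2^n)$. This is done by induction on $n$: $W(1)=\{x_1,\dots,x_d\}$ is obviously distinct, and since $W(2^{n+1})=C(2^n)\cdot W(2^n)$ consists of concatenations $cw$ with both factors of length $2^n$, the first $2^n$ letters of $cw$ recover $c$ and the last $2^n$ recover $w$, so distinct pairs $(c,w)\in C(2^n)\times W(2^n)$ yield distinct monomials.

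The conclusion follows immediately: each $w\in W$ is a subword of itself and $w\in W$, so the characterization gives $w\neq0$ in $B$; and distinct nonzero monomials in a monomial algebra are linearly independent. The only conceptual step is the characterization of when a monomial vanishes in $B$, which I expect to be the "main obstacle"---everything else is bookkeeping about the inductive construction of $W$.
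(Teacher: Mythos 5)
Your proof is correct and takes essentially the same route as the paper's: both reduce to the observation that distinct nonzero monomials in a monomial algebra are linearly independent, and both show $w\in W$ is nonzero by noting that any factor $v$ of $w$ satisfies $w\in AvA\cap W$, so $v$ cannot be one of the defining relators. You spell out the distinctness of the words in $W$ and the full characterization of nonzero monomials in $B$ (which the paper uses implicitly here and in Lemma~\ref{growth:upper}), but the substance is identical.
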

\begin{proof}
  In a monomial algebra, monomials are linearly independent as soon as
  they are distinct and nonzero. If $w\in W$ were $0$ in $B$, we would
  have $w=avb$ for some $v\in M$ such that $AvA\cap W=\emptyset$; this
  contradicts $w\in W$.
\end{proof}

\begin{lemma}\label{growth:recur}
  Let $w\in M$ be a word of degree $2^m$. Assume that $w$ is a subword
  of $C(2^n)W(2^n)$ or of $W(2^n)C(2^n)$ for some $n>m$. Then $w$ is a
  subword of $C(2^{n-1})W(2^{n-1})$ or of $W(2^{n-1})C(2^{n-1})$.
\end{lemma}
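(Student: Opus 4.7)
The plan is to exploit the recursive definition $W(2^n) = C(2^{n-1}) W(2^{n-1})$ combined with the inclusion $C(2^n) \subseteq W(2^n)$. These two facts together let me refine any element of $C(2^n) W(2^n)$ or $W(2^n) C(2^n)$ into a concatenation of four equal-length sub-blocks of length $2^{n-1}$, which is precisely the resolution needed to place $w$ (of length $2^m \le 2^{n-1}$) inside the previous level of the construction.

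Concretely, I first take the element $u$ of which $w$ is a subword, say $u = c \cdot w'$ with $c \in C(2^n)$ and $w' \in W(2^n)$ (the case $u \in W(2^n) C(2^n)$ proceeds identically). Using $c \in C(2^n) \subseteq W(2^n) = C(2^{n-1}) W(2^{n-1})$, I expand $c = c_1 w_1$ with $c_1 \in C(2^{n-1})$ and $w_1 \in W(2^{n-1})$; likewise $w' = c_2 w_2$ with $c_2 \in C(2^{n-1})$ and $w_2 \in W(2^{n-1})$. This writes $u$ as the concatenation of four blocks $c_1, w_1, c_2, w_2$ each of length $2^{n-1}$, with the first and third additionally lying in $C(2^{n-1})$.

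Now I use the length bound $2^m \le 2^{n-1}$ via a short case analysis. Let $w$ occupy positions $[i, i+2^m]$ inside $u$. If $i + 2^m \le 2^n$, then $w$ lies inside $c_1 w_1$, which is an element of $C(2^{n-1}) W(2^{n-1})$. If $i \ge 2^n$, then $w$ lies inside $c_2 w_2$, which is again in $C(2^{n-1}) W(2^{n-1})$. Otherwise $i < 2^n < i + 2^m$, and the length bound forces $2^{n-1} \le 2^n - 2^m \le i$ and $i + 2^m \le 2^n + 2^{n-1}$, so $w$ lies inside $w_1 c_2 \in W(2^{n-1}) C(2^{n-1})$.

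The only delicate point is the interval-arithmetic check at the end, namely that a subword of length at most $2^{n-1}$ inside a concatenation of four blocks of length $2^{n-1}$ always fits into two consecutive blocks; this is elementary. The rest is pure bookkeeping in the inductive definition of $W$, and no deeper argument is needed. I do not anticipate a real obstacle: the asymmetry between the conclusions $C(2^{n-1}) W(2^{n-1})$ and $W(2^{n-1}) C(2^{n-1})$ is exactly tuned so that the middle case (straddling the midpoint of $u$) lands on $W(2^{n-1}) C(2^{n-1})$ while the two extremal cases land on $C(2^{n-1}) W(2^{n-1})$.
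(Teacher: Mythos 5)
Your proof is correct and takes essentially the same approach as the paper's: decompose $u$ into four length-$2^{n-1}$ blocks using $W(2^n)=C(2^{n-1})W(2^{n-1})$ and $C(2^n)\subseteq W(2^n)$, then split into cases according to whether $w$ lies in the first half, the second half, or straddles the midpoint. The only cosmetic difference is notation (the paper writes $u=u_1u_2$ with $u_i=u_{i1}u_{i2}$ instead of your $c_1w_1c_2w_2$).
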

\begin{proof}
  Let $w$ be a subword of some word $u\in W(2^n)C(2^n)\cup
  C(2^n)W(2^n)$; write $u=u_1u_2$ with $u_1,u_2\in W(2^n)$, and either
  $u_1\in C(2^n)$ or $u_2\in C(2^n)$. If $w$ is a subword of $u_1$ or
  of $u_2$, then $w$ is a subword of a word in
  $W(2^n)=C(2^{n-1})W(2^{n-1})$, so we are done.

  If $w$ overlaps $u_1$ and $u_2$, write $u_1=u_{11}u_{12}$ and
  $u_2=u_{21}u_{22}$ with $u_{11},\dots,u_{22}\in W(2^{n-1})$; then
  $u_{21}\in C(2^{n-1})$ because $u_2\in W(2^n)$. By assumption,
  $n-1\ge m$, so $u$ is a subword of $u_{12}u_{21}$, which belongs to
  $W(2^{n-1})C(2^{n-1})$ as required.
\end{proof}

\begin{lemma}\label{growth:upper}
  Every non-zero degree-$2^m$ monomial in $B$ is a subword of a
  monomial in $W(2^m)C(2^m)\cup C(2^m)W(2^m)$.
\end{lemma}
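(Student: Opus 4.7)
The plan is to read off non-zero monomials of $B$ directly from the combinatorics of $W$, then reduce downward in $n$ using Lemma~\ref{growth:recur} until we land at level $m$.

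First I would argue that, because the defining relations of $B$ are themselves monomials, a monomial $v\in M$ is nonzero in $B$ if and only if $v$ is a subword of some element of $W$. Concretely, the ideal $\langle w\in M\mid AwA\cap W=\emptyset\rangle$ is spanned by the monomials that contain such a $w$ as a subword; but containing a subword disjoint from $W$ is equivalent to $v$ itself being disjoint from $W$, i.e.\ to $AvA\cap W=\emptyset$. (This is Lemma~\ref{growth:lower}'s mirror: the argument used there for words in $W$ being nonzero already rests on this equivalence.) So every nonzero monomial of degree $2^m$ in $B$ is a subword of some $u\in W(2^n)$ for a unique $n$, and clearly $n\ge m$.

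Next I would dispose of the base case $n=m$: here $w$ is a subword of $u\in W(2^m)$, so $w$ is a subword of $uc$ for any choice of $c\in C(2^m)$. The set $C(2^m)$ is nonempty because $|C(2^m)|=\lceil f(2^{m+1})/f(2^m)\rceil\ge1$ (since $f$ is positive and increasing). Thus $w$ is a subword of an element of $W(2^m)C(2^m)$, as wanted.

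For the inductive step $n>m$, I would unfold the construction once: $u\in W(2^n)=C(2^{n-1})W(2^{n-1})$, so $w$ is a subword of some element of $C(2^{n-1})W(2^{n-1})\cup W(2^{n-1})C(2^{n-1})$. Since $n-1\ge m$, either we have reached level $m$ and stop, or $n-1>m$ and Lemma~\ref{growth:recur} applies to produce a word of $C(2^{n-2})W(2^{n-2})\cup W(2^{n-2})C(2^{n-2})$ containing $w$ as a subword. Iterating Lemma~\ref{growth:recur} a total of $n-m$ times brings us down to level $m$, where the desired inclusion is the conclusion.

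This is essentially a bookkeeping argument with no real obstacle; the content is entirely packaged in Lemma~\ref{growth:recur}. The only subtle point worth stating carefully is the equivalence in the first paragraph, since the definition of $B$ is phrased in terms of elements $w$ with $AwA\cap W=\emptyset$ rather than directly in terms of non-subwords of $W$.
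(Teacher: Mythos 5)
Your proof is correct and follows the paper's own approach: identify that nonzero monomials of $B$ are exactly the subwords of words in $W$, then descend to level $m$ by iterated application of Lemma~\ref{growth:recur}. You also make explicit two points the paper's one-line proof glosses over (the equivalence between ``nonzero in $B$'' and ``subword of $W$'', and the degenerate case $n=m$ where the recursion lemma does not apply), which is sound and in fact fixes the paper's off-by-sign typo ``$m-n-1$''.
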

\begin{proof}
  Let $w\in M(2^m)$ be non-zero; so $awb\in
  W(2^n)=C(2^{n-1})W(2^{n-1})$ for some $n\ge m$. Apply then $m-n-1$
  times Lemma~\ref{growth:recur}.
\end{proof}

\begin{lemma}\label{growth:W}
  For all $n\in\N$, we have
  \[f(2^n)\le\#W(2^n)<2^nf(2^n).\]
\end{lemma}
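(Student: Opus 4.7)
The plan is to prove both inequalities by induction on $n$, using the recursive formula
\[\#W(2^{n+1})=\#C(2^n)\cdot\#W(2^n)=\lceil f(2^{n+1})/f(2^n)\rceil\cdot\#W(2^n)\]
coming directly from the construction $W(2^{n+1})=C(2^n)W(2^n)$. The base case is $n=1$: here $\#W(1)=d=f(1)$, and a one-step application of the recursion using $f(2)\le f(1)^2$ (submultiplicativity) and monotonicity $f(1)\le f(2)$ gives $f(2)\le\#W(2)<2f(2)$, establishing the bound for $n=1$. (One should also verify that $\#C(2^n)\le\#W(2^n)$, so that the choice of $C(2^n)$ inside $W(2^n)$ is possible; this uses $f(2^{n+1})\le f(2^n)^2$ together with the lower half of the induction hypothesis $f(2^n)\le\#W(2^n)$.)

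For the inductive step, assume $f(2^n)\le\#W(2^n)<2^n f(2^n)$. The lower bound is immediate: since $\lceil x\rceil\ge x$, we have
\[\#W(2^{n+1})\ge \frac{f(2^{n+1})}{f(2^n)}\cdot f(2^n)=f(2^{n+1}).\]
For the upper bound I will use $\lceil x\rceil<x+1$ and the monotonicity hypothesis $f(2^n)\le f(2^{n+1})$:
\[\#W(2^{n+1})<\Bigl(\frac{f(2^{n+1})}{f(2^n)}+1\Bigr)\cdot 2^n f(2^n)=2^n f(2^{n+1})+2^n f(2^n)\le 2^{n+1}f(2^{n+1}).\]

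There is no real obstacle here: the computation is essentially bookkeeping between the ceiling function in the definition of $\#C(2^n)$, submultiplicativity/monotonicity of $f$, and the inductive estimates on $\#W(2^n)$. The only mildly delicate point is keeping the strict inequality in the upper bound through the induction; this is handled by using $\lceil x\rceil<x+1$ (strict) at each step so the error term accumulates only as an extra factor of $2$ per doubling, matching the prefactor $2^n$ in the stated bound.
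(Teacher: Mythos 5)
Your proof is correct and follows essentially the same inductive argument as the paper's, built on the identity $\#W(2^{n+1})=\#C(2^n)\#W(2^n)$ together with the two-sided bound $f(2^{n+1})/f(2^n)\le\#C(2^n)<f(2^{n+1})/f(2^n)+1$ and monotonicity of $f$. The sanity check that $\#C(2^n)\le\#W(2^n)$, which the paper leaves implicit, is a welcome addition confirming the construction is well-defined.
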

\begin{proof}
  By induction; $\#W(1)=f(1)$, and $f(2^{n+1})\le f(2^n)\#C(2^n)<f(2^{n+1})+f(2^n)$, so
  \[f(2^{n+1})\le \#W(2^{n+1})=\#W(2^n)\#C(2^n)<2^n(f(2^{n+1})+f(2^n))\le 2^{n+1}f(2^{n+1}).\qedhere\]
\end{proof}

\begin{proof}[Proof of Theorem~\ref{thm:arbitrary}]
  By Lemmata~\ref{growth:lower} and~\ref{growth:W}, we have
  \[\dim B(2^n)\ge\#W(2^n)\ge f(2^n).\]

  The other inequality of Lemma~\ref{growth:W}, combined with
  $\#C(2^n)=\lceil f(2^{n+1})/f(2^n)\rceil$, implies
  \[\#C(2^n)W(2^n)\le\left\lceil\frac{f(2^{n+1})}{f(2^n)}\right\rceil2^nf(2^n)\le2^n(f(2^{n+1})+f(2^n));\]
  and similarly for $W(2^n)C(2^n)$. Each of
  these monomials has at most $2^n+1$ distinct subwords of length
  $2^n$. Therefore, by Lemma~\ref{growth:upper},
  \begin{align*}
    \dim B(2^n)&\le2(2^n+1)\#W(2^n)\#C(2^n)<2^{n+1}(2^n+1)(f(2^{n+1})+f(2^n))\\
    &\le2^{2n+3}f(2^{n+1}).\qedhere
  \end{align*}
\end{proof}

\begin{lemma}\label{lemma:2^n}
  If $f,g$ be two increasing functions such that $f(2^n)\le g(2^n)$
  holds for all $n$, then $f\precsim g$.
\end{lemma}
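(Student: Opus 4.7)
The plan is to sandwich an arbitrary integer $n$ between two consecutive powers of $2$ and use monotonicity on each side. Specifically, I aim to prove $f(n)\le g(2n)$ for all $n\ge 1$, which immediately gives $f\precsim g$ with constant $C=2$ in the sense defined in the introduction.

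Given $n\ge 1$, let $m=\lceil\log n\rceil$, so that $2^{m-1}\le n\le 2^m\le 2n$. Since $f$ is increasing and $n\le 2^m$, we have $f(n)\le f(2^m)$. By the hypothesis, $f(2^m)\le g(2^m)$. Finally, since $g$ is increasing and $2^m\le 2n$, we have $g(2^m)\le g(2n)$. Chaining these three inequalities yields $f(n)\le g(2n)$, as desired.

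There is no real obstacle here; the only minor subtlety is ensuring that the comparison power of $2$ lies within a bounded multiplicative factor of $n$, which is exactly what $m=\lceil\log n\rceil$ achieves. The argument relies in an essential way on both functions being monotone, so that the passage from the value at $n$ (respectively $2^m$) to the value at $2^m$ (respectively $2n$) is one-sided.
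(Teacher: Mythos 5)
Your proof is correct and is essentially identical to the paper's: both choose the smallest power of two at least $n$ (lying in $[n,2n]$) and chain $f(n)\le f(2^m)\le g(2^m)\le g(2n)$ using monotonicity of $f$ and $g$ together with the hypothesis at powers of two.
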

\begin{proof}
  For any $m\in\N$, let $n\in\N$ be minimal such that $m\le 2^n$. We
  have $f(m)\le f(2^n)\le g(2^n)\le g(2m)$, so $f\precsim g$.
\end{proof}

\begin{proof}[Proof of Corollary~\ref{cor:many}]
  Let $f$ be a submultiplicative, increasing function with $f(Cn)\ge
  nf(n)$. Note that this implies $f(n)\sim nf(n)$, and more generally
  $f(n)\sim p(n)f(n)$ for any polynomial $p$. By
  Theorem~\ref{thm:arbitrary} and Lemma~\ref{lemma:2^n}, there exists
  an algebra $B$ with $\dim B(n)\sim f(n)$. Again using $f(n)\sim
  nf(n)$, the growth of $B$ satisfies $v(n)\sim f(n)$.
\end{proof}

\subsection*{Thanks} The first author would like to express her
thanks to the University of G\"ottingen for their generous hospitality
whilst she was in residence as Emmy Noether Professor in May
2011. Both authors are grateful to Professor Zelmanov for the
inspiration that~\cite{zelmanov:openpbalgebras} provided.

Both authors are also very grateful to the referees for their very
helpful comments.
\begin{bibsection}
\begin{biblist}
\bibselect{math}
\end{biblist}
\end{bibsection}

\end{document}